\newtheorem{theorem}{Theorem}[section]
\newtheorem{proposition}[theorem]{Proposition}
\newtheorem{lemma}[theorem]{Lemma}
\newtheorem{corollary}[theorem]{Corollary}
\newtheorem{conjecture}[theorem]{Conjecture}
\theoremstyle{definition}
\newtheorem{definition}[theorem]{Definition}
\newtheorem{remark}[theorem]{Remark}
\newcommand{\wbar}[1]{\overline{#1}}
\newcommand{\what}[1]{\widehat{#1}}
\newcommand{\wtil}[1]{\widetilde{#1}}
\newcommand{\til}[1]{\tilde{#1}}
\newcommand{\id}{\operatorname{id}}
\newcommand{\ind}{\operatorname{ind}}
\newcommand{\supp}{\operatorname{supp}}
\newcommand{\spn}{\operatorname{span}}
\newcommand{\fA}{\mathcal{A}}
\newcommand{\fB}{\mathcal{B}}
\newcommand{\fC}{\mathcal{C}}
\newcommand{\fH}{\mathcal{H}}
\newcommand{\fK}{\mathcal{K}}
\newcommand{\fM}{\mathcal{M}}
\newcommand{\fS}{\mathcal{S}}
\newcommand{\fV}{\mathcal{V}}
\newcommand{\Cee}{\mathbb{C}}
\newcommand{\Tee}{\mathbb{T}}
\newcommand{\Zee}{\mathbb{Z}}
\newcommand{\Que}{\mathbb{Q}}
\newcommand{\En}{\mathbb{N}}
\newcommand{\alp}{\alpha}
\newcommand{\del}{\delta}
\newcommand{\Del}{\Delta}
\newcommand{\Gam}{\Gamma}
\newcommand{\lam}{\lambda}
\newcommand{\sig}{\sigma}
\newcommand{\norm}[1]{\left\Vert#1\right\Vert}
\newcommand{\cbnorm}[1]{\left\Vert#1\right\Vert_{\mathrm{cb}}}
\newcommand{\ball}{\mathrm{b}}
\newcommand{\cb}{\mathrm{cb}}
\newcommand{\bl}{\mathrm{L}}
\newcommand{\fal}{\mathrm{A}}
\newcommand{\fsal}{\mathrm{B}}
\newcommand{\meas}{\mathrm{M}}
\newcommand{\vn}{\mathrm{VN}}
\begin{document}

\title[Similarity degree of Fourier algebras]
{Similarity degree of Fourier algebras}

\author{Hun Hee Lee, Ebrahim Samei and Nico Spronk}

\begin{abstract}
We show that for a locally compact group $G$, amongst a class which contains amenable and
small invariant neighbourhood groups, that its Fourier algebra $\fal(G)$ satisfies a completely bounded 
version Pisier's similarity property with similarity degree at most $2$.  Specifically, 
any completely bounded homomorphism $\pi:\fal(G)\to\fB(\fH)$
admits an invertible $S$ in $\fB(\fH)$ for which $\|S\|\|S^{-1}\|\leq \cbnorm{\pi}^2$ 
and $S^{-1}\pi(\cdot)S$ extends to a $*$-representation of 
the C*-algebra $\fC_0(G)$.  This significantly improves some results due to Brannan and Samei
({\em J. Funct. Anal.} 259, 2010) and Brannan, Daws and Samei 
({\em M\"{u}nster J. Math} 6, 2013).
We also note that $\fal(G)$ has completely bounded similarity degree $1$ if and only if it is completely isomorphic to an operator algebra if and only if $G$ is finite.
\end{abstract}

\maketitle

\footnote{{\it Date}: \today.

2000 {\it Mathematics Subject Classification.} Primary 46K10;
Secondary 43A30, 46L07, 43A07.
{\it Key words and phrases.} Fourier algebra, similarity degree, amenable.

The first named author would like to thank Seoul National University through the Research Resettlement Fund for the new faculty and the Basic Science Research Program through the National Research Foundation of Korea (NRF), grant  NRF-2015R1A2A2A01006882.
The second named author would like to thank NSERC Grant 409364-2015.
The third named author would like thank NSERC Grant 312515-2010.}


In \cite{pisier}, Pisier launched a major assault on a large class of similarity problems
including Kadison's similarity problem for C*-algebras and Dixmier's similarity problem
for groups.  Pisier's partial solution to Dixmier's problem (\cite[Theorem 3.2]{pisier}
for discrete groups, which is adapted in the third named author's thesis \cite{spronkT} to
general locally compact groups) is encapsulated in the following result.

\begin{theorem}\label{theo:pisdix}
Let $G$ be a locally compact group.  Any bounded homomorphism of the group algebra to the 
algebra of bounded operators on a Hilbert space, $\pi:\bl^1(G)\to\fB(\fH)$,
admits an invertible $S$ in $\fB(\fH)$ for which 

{\bf (a)} $\pi_S=S\pi(\cdot)S^{-1}$ is
a $*$-representation:  $\pi_S(f^*)=\pi_S(f)^*$, and with

{\bf (b)} $\|S\|\|S^{-1}\|\leq K\norm{\pi}^2$, for some constant $K$

\noindent if and only if $G$ is amenable.
\end{theorem}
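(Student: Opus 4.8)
The plan is to restate the theorem in terms of uniformly bounded representations of $G$ and then treat the two implications separately. Since $\pi$ may be compressed to the essential subspace $\overline{\pi(\bl^1(G))\fH}$, I may assume it is nondegenerate; it then extends to a bounded unital homomorphism of $\meas(G)$, and $\til{\pi}(g):=\pi(\del_g)$ defines a strongly continuous representation of $G$ by invertibles with $\sup_g\norm{\til{\pi}(g)}\leq\norm{\pi}$ and $\sup_g\norm{\til{\pi}(g)^{-1}}=\sup_g\norm{\til{\pi}(g^{-1})}\leq\norm{\pi}$. Conversely any such uniformly bounded $\til{\pi}$ integrates to a bounded homomorphism of $\bl^1(G)$, and for invertible $S\in\fB(\fH)$ the map $\pi_S=S\pi(\cdot)S^{-1}$ is a $*$-representation of $\bl^1(G)$ exactly when $g\mapsto S\til{\pi}(g)S^{-1}$ is a unitary representation of $G$. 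Writing $d(\til{\pi})$ for the infimum of $\norm{S}\norm{S^{-1}}$ over all $S$ that unitarize $\til{\pi}$, the assertion becomes: $G$ is amenable if and only if there is a constant $K$ with $d(\til{\pi})\leq K\,(\sup_g\norm{\til{\pi}(g)})^2$ for every uniformly bounded representation.

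For the \textbf{if} direction, Dixmier's averaging argument applies and in fact gives $K=1$. Put $M=\norm{\pi}$, fix a left-invariant mean $m$ on $\bl^\infty(G)$, and define $\langle\xi,\eta\rangle_m=m\bigl(g\mapsto\langle\til{\pi}(g)\xi,\til{\pi}(g)\eta\rangle\bigr)$, which makes sense since the integrand is a bounded continuous function of $g$. From $M^{-1}\norm{\xi}\leq\norm{\til{\pi}(g)\xi}\leq M\norm{\xi}$ and positivity of $m$ one gets $M^{-2}\norm{\xi}^2\leq\langle\xi,\xi\rangle_m\leq M^2\norm{\xi}^2$, so $\langle\xi,\eta\rangle_m=\langle A\xi,\eta\rangle$ for a positive invertible $A$ with $M^{-2}\leq A\leq M^2$, and left-invariance of $m$ makes $\langle\cdot,\cdot\rangle_m$ invariant under $\til{\pi}$. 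With $S=A^{1/2}$ one checks directly that $S\til{\pi}(g)S^{-1}$ is unitary for each $g$, while $\norm{S}\norm{S^{-1}}=\norm{A}^{1/2}\norm{A^{-1}}^{1/2}\leq M^2=\norm{\pi}^2$; a short computation with the modular function confirms that $\pi_S$ is then a $*$-representation of $\bl^1(G)$.

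For the \textbf{only if} direction one argues by contraposition: assuming a uniform quadratic bound $d(\pi)\leq K\norm{\pi}^2$ for all bounded homomorphisms, one must deduce amenability of $G$, and the mechanism is cohomological. Given a $*$-representation $\rho$ of $\bl^1(G)$ on $\fH$ and a bounded derivation $D:\bl^1(G)\to\fB(\fH)$ for the bimodule structure $a\cdot T\cdot b=\rho(a)T\rho(b)$, the upper-triangular map $\pi_t=\left(\begin{smallmatrix}\rho&tD\\0&\rho\end{smallmatrix}\right)$ on $\fH\oplus\fH$ is a bounded homomorphism with $\norm{\pi_t}\leq 1+t\norm{D}$, and it is similar to $\rho\oplus\rho$ through a triangular conjugator precisely when $tD$ is inner. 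Feeding $d(\pi_t)\leq K(1+t\norm{D})^2$ into the analysis of the unitarizing operators $S_t$ — and extracting, as $t$ varies, a bounded $R$ with $D=\mathrm{ad}_R$ — forces $D$ to be inner with a norm bound; since $\rho$ and $D$ were arbitrary this says that the bounded $1$-cohomology of $\bl^1(G)$ vanishes in the relevant sense, whence by Johnson's theorem $\bl^1(G)$, equivalently $G$, is amenable. This last step — the quantitative passage from the quadratic similarity bound to vanishing of $1$-cohomology, which is the core of Pisier's argument \cite{pisier} and, in the general locally compact rather than discrete setting, of its adaptation in \cite{spronkT} (where sums over group elements are replaced by the measure algebra and bounded approximate identities, and one invokes Leptin's or Reiter's characterisation of amenability) — is the main obstacle; the \textbf{if} direction is by comparison a soft averaging argument.
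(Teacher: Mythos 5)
First, a point of orientation: the paper does not prove Theorem~\ref{theo:pisdix} at all --- it is quoted as known, with the hard (``only if'') direction attributed to \cite{pisier} for discrete groups and to \cite{spronkT} for general locally compact groups, and with the remark after Theorem~\ref{theo:sim} observing that the easy direction (with $K=1$) follows from Theorem~\ref{theo:sim} together with the fact \cite{spronk} that $m_{2,1}:\bl^1(G)\otimes^h\bl^1(G)\to\mathrm{C}^*(G)$ is a complete quotient map for amenable $G$. Your treatment of the ``if'' direction is correct and complete: passing to the essential subspace, extending to $\meas(G)$ to get a uniformly bounded strongly continuous representation $\til{\pi}$, and averaging the inner product against an invariant mean to obtain $S=A^{1/2}$ with $\norm{S}\norm{S^{-1}}\leq\norm{\pi}^2$ is exactly Dixmier's argument \cite{dixmier}, which the paper explicitly flags as the more elementary route.

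The ``only if'' direction, however, contains a genuine gap, and it is the direction carrying all of the content of the theorem. Even granting the step you defer as ``the main obstacle'' (extracting from the family $S_t$ a single bounded $R$ with $D=\mathrm{ad}_R$), what you would obtain is the vanishing of $H^1(\bl^1(G),\fB(\fH))$ only for bimodule actions of the form $a\cdot T\cdot b=\rho(a)T\rho(b)$ with $\rho$ a $*$-representation. Johnson's theorem characterizes amenability by the vanishing of $H^1(\bl^1(G),X^*)$ for \emph{all} dual Banach bimodules $X^*$; the $*$-representation bimodules are a very special subclass, and no implication from this restricted vanishing back to amenability is known. Indeed, if such an implication were available by soft means, the unquantified similarity property would already imply amenability (the triangular trick shows similarity to a $*$-representation forces these derivations to be inner), i.e.\ Dixmier's problem would be solved --- so the phrase ``whence by Johnson's theorem'' cannot be justified as written. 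A symptom of the problem is that the exponent $2$ plays no essential role in your cohomological mechanism: a bound $d(\pi)\leq K\norm{\pi}^N$ for any $N$ would yield the same conclusion about triangular derivations, whereas Pisier's theorem is precisely a statement about the exponent. His actual argument exploits the equivalence of the degree-$2$ bound with (complete) surjectivity of the product map $\bl^1(G)\otimes^h\bl^1(G)\to\mathrm{C}^*(G)$; the adjoint being bounded below dominates the Fourier--Stieltjes norm by a multiplier-type norm, and amenability then follows from the multiplier characterizations of amenability. You should either carry out that factorization argument or, as the paper does, cite \cite{pisier} and \cite{spronkT} for this direction.
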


We shall refer to success of (a) as the {\it similarity property} for the algebra $\bl^1(G)$.
The exponent $2$ in (b) means that $\bl^1(G)$ has {\it similarity degree} at most $2$.

Our goal is to gain an analogue of this similarity theorem for the Fourier algebra $\fal(G)$.
This algebra was defined by Eymard~\cite{eymard}, and is the  Pontryagin dual object to $\bl^1(G)$.  
We note that $\fal(G)$ is a Banach algebra of functions on $G$, which is conjugation-closed
and admits spectrum $G$.  In particular, $\fal(G)$ is a dense subalgebra of $\fC_0(G)$.
We succeed in our goal, though only under extra hypotheses on the locally compact 
group $G$, as specified in Theorem \ref{theo:main}, below.

One of the most intriguing aspects of Pisier's methods, is that they add to the convincing body of work
which indicates the 
value of considering the operator space structure to attack certain questions which appear to have
no a priori need for such.  However, as we explain below, we shall require a natural operator space
structure on the Fourier algebra, to frame our problem.

We observe that for abelian $G$, the Fourier transform of the involution on $\bl^1(\what{G})$,
$f\mapsto f^*$, is given on $\fal(G)$ by complex conjugation, $u\mapsto \bar{u}$.  Thus this is the suitable
involution on $\fal(G)$ which we shall consider for any locally compact $G$.   Our goal is to characterize
representations $\pi:\fal(G)\to\fB(\fH)$ which admit $S$ in $\fB(\fH)$ for which $\pi_S=S\pi(\cdot)S^{-1}$
is a $*$-representation:  $\pi_S(\bar{u})=\pi_S(u)^*$; this is the {\it similarity property} for $\fal(G)$.  
Because any such homomorphism $\pi$ must
necessarily factor through the minimal operator space of functions $\fC_0(G)$
(which is the enveloping C*-algebra of the involutive algebra $\fal(G)$ -- see begining of \S \ref{sec:main}),  
$\pi$ must be completely bounded.  Hence we wish to know if all completely bounded homomorphisms
of $\fal(G)$ enjoy the similarity property.  This was the goal of the investigation of Brannan
and the second named author~\cite{brannans} and was followed upon by the same two authors in 
conjunction with M. Daws~\cite{brannands}.  They only gained a complete solution
for small invariant neighbourhood (SIN) groups in the first paper, and
for amenable groups containing open SIN subgroups in the second.   
We gain a complete solution for a class of groups containing both amenable and SIN
groups (Theorem \ref{theo:main}).    In all of their calculations, it was shown that
the completely bounded similarity degree is at most $4$, i.e.\ $\|S\|\|S^{-1}\|\leq\cbnorm{\pi}^4$ in 
Theorem \ref{theo:pisdix} (b), above, with $\fal(G)$ replacing $\bl^1(G)$.  Our Theorem \ref{theo:main}
improves their results by showing that the completely bounded similarity degree is at most $2$, in  cases 
generalizing theirs.  However, we also indicate some cases with
completely bounded similarity degree at most $6$ (Corollary \ref{cor:simsix}); we suspect that
completely bounded similarity degree at most $2$ holds generally. Two ancillary
representations $\check{\pi}$ and $\pi^*$ are introduced in the article \cite{brannans}, which we 
mention in Corollary \ref{cor:brannans}.  Brannan and Samei showed that simultaneous complete 
boundedness for either of the pairs $\pi$ and $\check{\pi}$, or $\pi$ and $\pi^*$, 
characterizes the completely bounded similarity property for $\fal(G)$.
We augment their result with a larger class of groups. 
At the end of the paper we further note that the completely bounded similarity degree is exactly $1$ only if
$G$ is finite.

It is proved by Choi and the second named author~\cite{chois}, that if $G$ contains a discrete copy
of a free group, then there are bounded homomorphisms $\pi:\fal(G)\to\fB(\fH)$ which are not
completely bounded.  In particular, such homomorphisms cannot be similar to $*$-homomorphisms.  
With this in mind we shall restrict our attention to homomorphisms which are assumed to be completely
bounded.  The question of when a bounded homomorphism from $\fal(G)$ to $\fB(\fH)$
is completely bounded is trivial
in the case that $G$ admits an abelian subgroup of finite index, since
then $\fal(G)$ is naturally a maximal operator space (see Forrest and Wood~\cite{forrestw}); 
but for general (amenable) $G$, we have no tools to asses the question.  
We discuss this question briefly in Remark \ref{rem:autobdd},
below.

The main tool for our similarity result stems from the approach of Pisier to general similarity problems.  
We offer a purposely limited summary of it in the next section.  Our main results are stated and proved
in the section which follows.

\section{Pisier's approach to similarity problems}

Let $\fA$ be a Banach algebra with an operator space structure.  In practice, this will usually be a 
completely contractive Banach algebra, i.e.\ the mutliplication map extends to a complete contraction
from the operator projective tensor product $\fA\hat{\otimes}\fA$ to $\fA$.  We shall assume that
{\em there exists at least one injective completely contractive homomorphism $\pi:\fA\to\fB(\fH)$}, for
some Hilbert space $\fH$.

If $\pi:\fA\to\fB(\fH)$ is a completely
bounded homomorphism, then
\begin{equation}\label{eq:normacheived}
\cbnorm{\pi}=\sup\left\{\cbnorm{P_{\fH'}\pi(\cdot)|_{\fH'}}:
\begin{matrix} \fH'\text{ is a }\pi\text{-invariant closed subspace} \\
\text{of }\fH\text{ with }\dim\fH'\leq \mathrm{den}(\fA)\phantom{mmm}\end{matrix}\right\}
\end{equation}
where $\dim\fH'$ denotes the hilbertian dimension, $P_{\fH'}$ is the orthogonal projection onto $\fH'$
and $\mathrm{den}(\fA)$ is the smallest cardinality of a dense subset of $\fA$.
Indeed, for each $n$ and $k$ let $\xi=(\xi_{1k},\dots,\xi_{nk})$ and $\eta=(\eta_{1k},\dots,\eta_{nk})$
be unit vectors in $\fH^n\cong\ell^2_n\otimes^2\fH$ 
for which $\sup\{|\langle\id_n\otimes \pi(a)\xi|\eta\rangle|:\|\id_n\otimes \pi(a)\|\leq 1\}>
1-\frac{1}{k}$, and let $\fH'$ be the smallest space containing the cyclic subspaces
$\pi(\fA)\xi_{nk}$ and $\pi(\fA)\eta_{nk}$ and the vectors $\xi_{nk},\eta_{nk}$, themselves.

We fix $\fH_\fA$ with $\dim\fH_\fA= \mathrm{den}(\fA)$ and for any $c\geq 1$ we let
\[
\Gamma_c(\fA)=\{\pi:\fA\to\fB(\fH_\fA)\;|\;\pi\text{ is a homomorphism with }\cbnorm{\pi}\leq c\}.
\]
Let $\fH_c=\fH^{\Gamma_c(\fA)}$, so $\iota_c=\bigoplus_{\pi\in\Gam_c(\fA)}\pi:\fA\to\fB(\fH_c)$
is an injective homomorphism with $\cbnorm{\iota}\leq c$, and let
$\wtil{\fA}_c=\wbar{\iota_c(\fA)}$.  If $\pi:\fA\to\fB(\fH)$ is a homomorphism with
$\cbnorm{\pi}\leq c$, then (\ref{eq:normacheived}) provides a complete contraction
$\til{\pi}_c:\wtil{\fA}_c\to \fB(\fH)$ with $\til{\pi}_c\circ\iota_c=\pi$.  

The composition of maps on $N$-fold Haagerup tensor products
\[
\fA^{N\otimes^h}\xrightarrow{\iota_c^{N\otimes}}
{\wtil{\fA}_c}^{N\otimes^h}\xrightarrow{\text{multiplication}}\wtil{\fA}_c
\]
has completely bounded norm no larger than $c^N$, thanks to the fact that the Haagerup tensor product
completely contractively linearizes operator multiplication.  Hence the map
\begin{equation}\label{eq:mc}
m_{N,c}:\fA^{N\otimes^h}\to\wtil{\fA}_c,\; m_{N,c}(u_1\otimes \dots\otimes u_N)
=\frac{1}{c^N}\iota_c(u_1)\dots\iota_c(u_N)
\end{equation}
is a complete contraction. 

We can now state, for our context, the easy direction
of \cite[Theorem 2.5]{pisier}.  This is all that we shall require in the sequel.

\begin{theorem}\label{theo:sim}
Let $\fA$ be a Banach algebra and an operator space and $N$ a positive integer.   
Suppose the map $m_{N,1}:\fA^{N\otimes^h}\to \wtil{\fA}_1$ is a complete surjection,
i.e.\ there is a $K>0$ such that for any $n$, 
the unit ball $\ball_1(M_n\otimes\wtil{\fA}_1)$ is contained in the image of the radius 
$K$-ball, $\id_n\otimes m_{N,1}(\ball_K(M_n\otimes\fA^{N\otimes^h}))$.
Then any completely bounded homomorphism $\pi:\fA\to\fB(\fH)$ admits an
invertible operator $S$ in $\fB(\fH)$ for which
\[
\cbnorm{S\pi(\cdot)S^{-1}}\leq 1\text{ and }
\|S\|\|S^{-1}\|\leq K\cbnorm{\pi}^N.
\]
\end{theorem}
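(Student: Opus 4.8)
The plan is to exploit the surjectivity hypothesis on $m_{N,1}$ together with an averaging/renormalization argument of Pisier type. First I would observe that, given a completely bounded homomorphism $\pi:\fA\to\fB(\fH)$ with $c:=\cbnorm{\pi}$, one may without loss of generality assume $\fH=\fH_\fA$ and $\pi\in\Gamma_c(\fA)$: by the description \eqref{eq:normacheived} of the cb-norm, it suffices to control the compressions to $\pi$-invariant subspaces of dimension at most $\mathrm{den}(\fA)$, and a direct-sum-and-amplification argument reduces the general case to this one (the operator $S$ produced on the model space then pulls back). So fix $\pi\in\Gamma_c(\fA)$ and let $\til\pi_c:\wtil\fA_c\to\fB(\fH_\fA)$ be the associated complete contraction with $\til\pi_c\circ\iota_c=\pi$, furnished by the discussion preceding the statement.

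Next I would compare the level-$1$ and level-$c$ completions. The identity map on $\fA$ induces a completely contractive homomorphism $\wtil\fA_1\to\wtil\fA_c$ with dense range (since $\Gamma_1(\fA)\subseteq\Gamma_c(\fA)$ up to the obvious embedding of Hilbert spaces), and chasing definitions one gets $m_{N,c} = j\circ m_{N,1}$ up to the scalar $c^{-N}$, where $j$ is that comparison map; more precisely $c^{N}\,m_{N,c}$ and $m_{N,1}$ have the same "shape" on elementary tensors. The key point is that $\til\pi_c$ composed with $c^{N}m_{N,c}$ computes $\pi$ of a product: $\til\pi_c\bigl(c^{N}m_{N,c}(u_1\otimes\cdots\otimes u_N)\bigr)=\pi(u_1)\cdots\pi(u_N)$. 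Combining this with the surjectivity hypothesis applied at the level of $\wtil\fA_1$ and transported along $j$, one concludes that every element of $\wtil\fA_c$ (in particular $\iota_c(\fA)$, densely) is a $c^N$-bounded image under $\til\pi_c$ of $m_{N,1}$ of a $K$-bounded element of $\fA^{N\otimes^h}$. The upshot of this bookkeeping is a factorization of $\pi$ through the complete contraction $m_{N,1}$ that is surjective with constant $K$.

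The heart of the argument — and the step I expect to be the main obstacle — is then the standard but delicate Pisier device: produce the similarity operator $S$. Here one uses that a completely bounded, surjective (with the uniform matrix-ball constant $K$) homomorphism from an operator algebra, or more precisely the fact that $\til\pi_c|_{\wtil\fA_c}$ is a complete contraction whose precomposition with the complete quotient map $m_{N,1}$ is $\pi$ itself up to the scalar $c^N$, forces $\pi$ to be completely bounded with an \emph{explicit} constant, and then an application of the characterization of completely bounded homomorphisms that are similar to complete contractions (Haagerup's theorem / Paulsen's similarity theorem) yields $S\in\fB(\fH)$ with $\cbnorm{S\pi(\cdot)S^{-1}}\le 1$ and $\|S\|\,\|S^{-1}\|$ bounded by the cb-norm of the factorization, namely $K c^N$. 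Tracking the constants through the Haagerup-tensor linearization (which contributes the factor $c^N$) and through the surjectivity constant (which contributes $K$) gives exactly $\|S\|\|S^{-1}\|\le K\cbnorm{\pi}^N$, while the renormalized homomorphism is a complete contraction by construction. The subtle part is ensuring that Paulsen's theorem is applied in a way that keeps the similarity constant multiplicative rather than squared; this is precisely where Pisier's refinement over the naive argument is needed, and where one must be careful that the operator space in play is the one making $m_{N,1}$ a complete surjection.
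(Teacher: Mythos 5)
Your overall strategy --- factor $\pi$ through the operator algebra $\wtil{\fA}_1$ using the complete surjectivity of $m_{N,1}$, then invoke Paulsen's similarity theorem --- is the same as the paper's, but there is a genuine error at the hinge of the argument. You assert that the identity on $\fA$ induces a \emph{completely contractive} homomorphism $j:\wtil{\fA}_1\to\wtil{\fA}_c$ because $\Gamma_1(\fA)\subseteq\Gamma_c(\fA)$. That inclusion gives the canonical complete contraction in the \emph{opposite} direction, $\wtil{\fA}_c\to\wtil{\fA}_1$ (compression onto the $\Gamma_1(\fA)$-summands of $\fH_c$). The existence of a bounded homomorphism $\iota_{1,c}:\wtil{\fA}_1\to\wtil{\fA}_c$ with $\iota_{1,c}\circ\iota_1=\iota_c$ is precisely the nontrivial content of the theorem and cannot be had for free: a priori $\norm{\iota_1(a)}$ may be much smaller than $\norm{\iota_c(a)}$. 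The paper extracts $\iota_{1,c}$ \emph{from} the hypothesis: on algebraic tensors $\iota_{1,c}\circ m_{N,1}=c^N m_{N,c}$, the right-hand side is completely bounded by $c^N$, and the assumption that $m_{N,1}$ maps matricial $K$-balls onto unit balls then forces $\cbnorm{\iota_{1,c}}\leq Kc^N$. Your second paragraph mentions this relation but only as a ``definition chase'' after having already asserted that $j$ is contractive, so the logic is circular, and the constant $Kc^N$ is never actually attached to a specific map.

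A second, related gap is in the application of Paulsen's theorem, which requires the domain to be an operator algebra. Since $\fA$ itself need not be one, the theorem cannot be applied to $\pi$ directly (nor does ``$\pi$ is completely bounded with an explicit constant'' help, since $\pi$ is completely bounded by hypothesis); it must be applied to $\til{\pi}_c\circ\iota_{1,c}:\wtil{\fA}_1\to\fB(\fH)$, whose cb-norm is at most $\cbnorm{\iota_{1,c}}\leq Kc^N$ and which recovers $\pi$ after precomposition with the complete contraction $\iota_1$. Paulsen's theorem then gives $\|S\|\|S^{-1}\|$ equal to that cb-norm exactly, so the worry in your last paragraph about keeping the constant ``multiplicative rather than squared'' is a nonexistent difficulty, while the homomorphism to which the theorem is actually applied is left unidentified. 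Once $\iota_{1,c}$ is correctly constructed with $\cbnorm{\iota_{1,c}}\leq Kc^N$, the rest of your outline goes through as in the paper.
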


\begin{proof}
Let us give a short proof for conveneince of the reader.  
We wish to establish the existence of a completley bounded homomorphism
$\iota_{1,c}:\wtil{\fA}_1\to\wtil{\fA}_c$, which extends $\iota_c:\fA\to\wtil{\fA}_c$.
We observe that  on algebraic tensor products we have the following equality of
(a priori unbounded) operators
\[
\iota_{1,c}\circ m_{N,1}|_{\fA^{N\otimes}}=c^Nm_{N,c}|_{\fA^{N\otimes}}:\fA^{N\otimes}\to\wtil{\fA}_c
\]
where $m_{N,c}$ is defined in (\ref{eq:mc}), above.
Since $m_{N,c}:\fA^{N\otimes^h}\to\wtil{\fA}_c$ is a complete contraction, $\iota_{1,c}\circ m_{N,1}$
extends to a completely bounded map on $\fA^{N\otimes^h}$.  Furthermore, since
$m_{N,1}:\fA^{N\otimes^h}\to \wtil{\fA}_1$ is assumed to be completely surjective,  with matricial $K$-balls
mapped onto matricial unit balls, we  find that
\[
\cbnorm{\iota_{1,c}}\leq K\cbnorm{\iota_{1,c}\circ m_{N,1}}=K\cbnorm{c^Nm_{N,c}}\leq Kc^N.
\]
Thus we have a completely bounded homomorphism $\iota_{1,c}:\wtil{\fA}_1\to\wtil{\fA}_c$, with 
$\iota_{1,c}\circ\iota_1=\iota_c$.

We recall Paulsen's similarity
theorem \cite{paulsen}, that any completely bounded homomorphism from an operator algebra,
$\sig:\fB\to\fB(\fH)$, admits an invertible $S$ in $\fB(\fH)$ for which
$\sig_S=S\sig(\cdot)S^{-1}$ is a complete contraction and $\|S\|\|S^{-1}\|=\cbnorm{\sig}$.
A non-unital version of this may be found in  \cite[Theorem 5.1.2]{blecherlm}.
Hence for any homomorphism $\pi:\fA\to\fB(\fH)$ with $\cbnorm{\pi}\leq c$, the completely bounded
homomorphism $\til{\pi}_c\circ\iota_{1,c}:\wtil{\fA}_1\to\fB(\fH)$ admits an $S$ in $\fB(\fH)$
for which
\[
\cbnorm{S\til{\pi}_c\circ\iota_{1,c}(\cdot)S^{-1}}\leq 1\text{ and }
\|S\|\|S^{-1}\|=\cbnorm{\til{\pi}_c\circ\iota_{1,c}}\leq \cbnorm{\iota_{1,c}}.
\]
In particular, since $(\til{\pi}_c\circ\iota_{1,c})\circ\iota_1=\til{\pi}_c\circ\iota_c=\pi:\fA\to\fB(\fH)$
and $\iota_1$ is a complete contraction, we see that
\[
\cbnorm{S\pi(\cdot)S^{-1}}\leq 1\text{ and }\|S\|\|S^{-1}\|\leq\cbnorm{\iota_{1,c}}\leq Kc^N.
\]
The choice of $c=\cbnorm{\pi}$ gives the desired inequalities.
\end{proof}

We shall take the liberty to refer to the smallest $N$, satisfying the hypotheses of the theorem above,
the {\it completely bounded similarity degree}, $d_{\cb}(\fA)$ of the  Banach 
algebra and operator space $\fA$.  This differs mildly
from Pisier's definition in \cite{pisier}, though coincides in cases where $\fA$ has a bounded 
approximate identity, for example.  

We notice that $d_{\cb}(\fA)=1$ entails that 
$\iota_1:\fA\to\wtil{\fA}_1$ is a completely contractive complete isomorphism.
Hence the conclusion of Theorem \ref{theo:sim} is simply an expression of Paulsen's similarity
theorem.

\begin{remark}
Since contractive representations of a locally compact group
$G$ on $\fB(\fH)$ is necessarily unitary, and such representations
are in bijective correspondence with homomorphisms of $\bl^1(G)$ with non-degenerate range, 
we have $\wtil{\bl^1}(G)_1\cong\mathrm{C}^*(G)$, where $\bl^1(G)$ is equipped with the
maximal operator space structure.
The third named author, \cite[Theorem 6.10]{spronk}, showed that $m_{2,1}:\bl^1(G)\otimes^h\bl^1(G)\to
\mathrm{C}^*(G)$ is a complete quotient
map when $G$ is amenable.  Hence a proof of the easy direction of Theorem \ref{theo:pisdix}, with 
constant $K=1$, follows from Theorem \ref{theo:sim}.  Of course the original proof of Dixmier 
\cite{dixmier} is more elementary.

We note that $d_{\cb}(\bl^1(G))=1$ only when
$G$ is finite.  Indeed, $\bl^1(G)$ is Arens regular exactly when $G$ is finite, thanks to Young \cite{young};
and an operator algebra is always Arens regular, thanks to Civin and Yood \cite{civiny}.
\end{remark}

\section{Similarity for Fourier algebras}\label{sec:main}

Let $G$ be a locally compact group.
As alluded to in the introduction, $\fC_0(G)$ is the enveloping C*-algebra of $\fal(G)$.
Indeed, any C*-algebra generated by a $*$-homomorphism of $\fal(G)$ admits spectrum which is a 
subset of $G$.  To use the result of the prior section we need to identify, for certain locally compact groups, 
the universal
algebra generated by completely contractive homomorphisms of their Fourier algebras.
The operator space structure on $\fal(G)$ is always the one arising from the duality relation
$\fal(G)^*\cong\vn(G)$; see \cite[\S 3.2]{effrosrB}.

\begin{proposition}\label{prop:contfal}
Let $G$ be a locally compact group for which the connected component of the identity $G_e$
is amenable.  Then

{\bf (a)} every completely contractive homomorphism $\pi:\fal(G)\to\fB(\fH)$ is a $*$-homomorphism.

In particular, we have that

{\bf (b)} the inclusion map $j:\fal(G)\to\fC_0(G)$ extends to a complete isometry
$\til{\jmath}_1:\wtil{\fal}(G)_1\to\fC_0(G)$.

Moreover, property (b) implies property (a).
\end{proposition}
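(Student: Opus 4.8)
The plan is to establish property~(a) directly by a dilation argument, then to deduce~(b), and finally to note the formal implication (b)$\Rightarrow$(a), so that~(a) and~(b) are in fact equivalent. For (b)$\Rightarrow$(a): if $\til{\jmath}_1:\wtil{\fal}(G)_1\to\fC_0(G)$ is a complete isometry, then it is automatically a completely isometric algebra isomorphism onto $\fC_0(G)$ --- multiplicative because it extends the homomorphism $j$ on the dense subalgebra $\iota_1(\fal(G))$, and onto because the dense subspace $j(\fal(G))$ lies in its closed range. Given a completely contractive homomorphism $\pi:\fal(G)\to\fB(\fH)$, one has the factorization $\pi=\til{\pi}_1\circ\iota_1$ with $\til{\pi}_1:\wtil{\fal}(G)_1\to\fB(\fH)$ a complete contraction (as recalled before Theorem~\ref{theo:sim}); reading $\til{\pi}_1$ through $\til{\jmath}_1$ exhibits it as a contractive homomorphism from the C*-algebra $\fC_0(G)$ into $\fB(\fH)$, hence as a $*$-homomorphism. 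Since $j$, and therefore $\til{\jmath}_1$, carries the involution $u\mapsto\bar{u}$ of $\fal(G)$ to complex conjugation on $\fC_0(G)$, this forces $\pi(\bar{u})=\pi(u)^*$.

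The substance is property~(a), and the only place the hypothesis ``$G_e$ amenable'' is used is the following approximation lemma, which I would isolate and which I regard as the main technical input; everything else below is standard operator-algebra machinery, valid for arbitrary $G$, that must simply be assembled with care. \emph{Lemma.} $\fal(G)$ contains a net $(e_\gamma)$ with $\norm{e_\gamma}_\infty\leq1$ and $e_\gamma\to1$ uniformly on compact subsets of $G$. To prove it, apply van Dantzig's theorem to the totally disconnected group $G/G_e$ to get an open subgroup $H\supseteq G_e$ with $H/G_e$ compact; then $H$ is amenable, being an extension of the amenable group $H/G_e$ by the amenable group $G_e$. By Leptin's theorem $\fal(H)$ has a F{\o}lner approximate identity $(f_\beta)$ of normalized positive-definite functions with $\norm{f_\beta}_\infty=f_\beta(e)=1$ and $f_\beta\to1$ uniformly on compacta of $H$. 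Since $H$ is open, each $f_\beta$ extends by zero to an element of $\fal(G)$ with no change of sup-norm, and composing with a left translation carries $\fal(H)$ isometrically onto $\{u\in\fal(G):\supp u\subseteq gH\}$; as the cosets of $H$ are clopen and disjoint, the sum over any finite family of cosets of the corresponding translates of $f_\beta$ has sup-norm $\leq1$, and the resulting net (doubly indexed by the finite family and by $\beta$) converges to $1$ uniformly on compacta.

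To prove~(a), let $\pi:\fal(G)\to\fB(\fH)$ be completely contractive. As $\fal(G)$ is a subspace of the C*-algebra $\fC_0(G)$, Wittstock's extension theorem provides a completely contractive $\Phi:\fC_0(G)\to\fB(\fH)$ with $\Phi|_{\fal(G)}=\pi$, and the representation theorem for completely bounded maps (see \cite{paulsen}) yields a non-degenerate $*$-representation $\rho:\fC_0(G)\to\fB(\fK)$ and contractions $V_1,V_2:\fH\to\fK$ with $\Phi(\cdot)=V_1^*\rho(\cdot)V_2$. Multiplicativity of $\pi$ on $\fal(G)$ becomes $V_1^*\rho(u)(I-V_2V_1^*)\rho(v)V_2=0$ for all $u,v\in\fal(G)$. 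Substituting the net $(e_\gamma)$ of the Lemma for $u$, for $v$, and for both at once --- using that $e_\gamma f\to f$ in $\fC_0(G)$, hence $\rho(e_\gamma)\to I_{\fK}$ in the strong operator topology --- and passing to strong limits yields $\pi(u)=E\pi(u)E$ for every $u$, where $E:=V_1^*V_2$ is a contractive idempotent, hence an orthogonal projection, and $\pi(u)|_{\operatorname{ran}E}=V_1^*\rho(u)V_2|_{\operatorname{ran}E}$. A short computation using $\norm{V_i}\leq1$ and $E=V_1^*V_2=V_2^*V_1=E^2$ shows $V_1$ and $V_2$ agree on $\operatorname{ran}E$; with $V:=V_1|_{\operatorname{ran}E}$ an isometry, this gives $\pi(u)|_{\operatorname{ran}E}=V^*\rho(u)V$. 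Comparing the values at $u$ and at $\bar{u}$ and using that $\rho$ is a $*$-representation yields $\pi(\bar{u})=\pi(u)^*$, so $\pi$ is a $*$-homomorphism; indeed $f\mapsto V^*\rho(f)VE$ is, by density of $\fal(G)$ in $\fC_0(G)$ and continuity, a $*$-representation of $\fC_0(G)$ extending $\pi$. This establishes~(a).

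Finally (a)$\Rightarrow$(b): first, $j:\fal(G)\to\fC_0(G)$ is completely contractive, since $\fC_0(G)$ is commutative and hence carries its minimal operator space structure, so a contractive map into it is automatically completely contractive; thus $\til{\jmath}_1$ exists as a complete contraction. The extension of an arbitrary completely contractive $\pi$ to a $*$-representation of $\fC_0(G)$ obtained above shows $\norm{(\id_n\otimes\pi)(U)}\leq\norm{U}_{M_n(\fC_0(G))}$ for all $n$ and $U$; taking the supremum over $\pi\in\Gam_1(\fal(G))$ gives $\norm{(\id_n\otimes\iota_1)(U)}\leq\norm{(\id_n\otimes j)(U)}_{M_n(\fC_0(G))}$ for $U\in M_n(\fal(G))$, which together with complete contractivity of $\til{\jmath}_1$ forces it to be a complete isometry on the dense subspace $\iota_1(\fal(G))$, hence everywhere; it is onto because $j(\fal(G))$ is dense in $\fC_0(G)$.
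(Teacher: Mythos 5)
Your reductions between (a) and (b) are sound and close to the paper's, but your proof of the core statement (a) has a fatal gap at its very first step: the appeal to Wittstock's extension theorem. Wittstock extends a map that is completely bounded \emph{for the operator space structure that $\fal(G)$ inherits as a subspace of $\fC_0(G)$}, i.e.\ essentially for the sup-norm. The hypothesis, however, is that $\pi$ is completely contractive for the canonical operator space structure on $\fal(G)$ coming from $\fal(G)^*\cong\vn(G)$, and the inclusion $j:\fal(G)\to\fC_0(G)$ is a complete contraction but very far from a complete isometry: $\norm{u}_\infty\leq\norm{u}_{\fal}$ with the gap unbounded. A $\pi$ that is completely contractive in the $\fal$-norm need not satisfy $\|\pi(u)\|\leq\norm{u}_\infty$ --- indeed, that $\pi$ is sup-norm contractive (equivalently, factors through $\fC_0(G)$) is essentially the \emph{conclusion} you are trying to reach, so the Wittstock step assumes a form of what is to be proved. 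Without $\Phi$, the entire dilation/multiplicative-domain argument that follows (which is otherwise standard and correctly executed) has nothing to act on.

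A symptom that something must be wrong: the only place you invoke amenability of $G_e$ is in your Lemma producing a net $(e_\gamma)$ in $\fal(G)$ with $\norm{e_\gamma}_\infty\leq 1$ and $e_\gamma\to 1$ uniformly on compacta --- but such a net exists in $\fal(G)$ for \emph{every} locally compact group (regularity of $\fal(G)$ gives, for each compact $K$, a $u$ with $u=1$ on $K$ and $0\leq u\leq 1$). So your argument, if valid, would prove (a) for all $G$, i.e.\ would settle the Conjecture stated at the end of the paper; this is a strong indication the argument cannot be repaired along these lines. The paper does not prove (a) by soft operator-space machinery: for amenable $G$ it cites the (genuinely hard) result \cite[Theorem 8.1]{brannands}, whose proof rests on Ruan's operator amenability of $\fal(G)$, and then reduces the case of amenable $G_e$ to an open amenable subgroup $H$ (via van Dantzig, as you do) together with \cite[Lemma 24]{brannans}, which bootstraps a $*$-representation of $\fal(H)$ to one of $\fal(G)$. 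That cited input is the missing substance; your write-up supplies no substitute for it. Your arguments for (a)$\Rightarrow$(b) and (b)$\Rightarrow$(a) are fine (and essentially the paper's), so the proposal would become correct if part (a) were replaced by the reduction to the results of Brannan--Daws--Samei and Brannan--Samei, or by an independent proof of comparable depth.
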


\begin{proof}
If $G$ is amenable, then (a) is the result \cite[Theorem 8.1]{brannands}.   Now if  $G_e$ is amenable.
It is well known that $G/G_e$ contains a compact open subgroup, which is hence the image of an open subgroup $H$ of $G$ which is an amenable extension of a compact group, thus itself amenable. 
Any completely contractive representation $\pi$ restricts to a completely contractive
representation $\pi|_{\fal(H)}:\fal(H)\to\fB(\fH)$, which is hence a $*$-representation.
Thus it follows from \cite[Lemma 24]{brannans} that $\pi$ itself must be a $*$-representation.
Hence (a) holds in this case.

Given a completely contractive homomorphism $\pi$, if (a) holds then
$\wbar{\pi(\fal(G))}$ is a commutative C*-algebra, hence a quotient
of the enveloping C*-algebra $\fC_0(G)$.  In particular, $\wtil{\fal}(G)_1=\wbar{\iota_1(\fal(G))}$
enjoys this property.  Hence, the completely contractive inclusion homomorphism
$j$ extends to a homomorphism of C*-algebras $\til{\jmath}_1$.
Since $\fal(G)$ has spectrum $G$, so too must $\wtil{\fal}(G)_1$, and thus
$\ker\til{\jmath}_1$ is the radical of $\wtil{\fal}(G)_1$, hence $\{0\}$.
Thus  $\til{\jmath}_1$ is an isomorphism of C*-algebras, whence a complete isometry.

If (b) holds, then $\iota_1={\til{\jmath}_1}^{-1}\circ j$, and hence is a $*$-homomorphism.
It is a standard exercise that any contractive homomorphism from a C*-algebra to $\fB(\fH)$
is a $*$-homomorphism (indeed one cuts down to the non-degenerate situation, extends
to the unitization and observes that unitaries are sent to unitaries).  Hence for any
completely contractive homomorphism $\pi:\fal(G)\to\fB(\fH)$, the extension $\til{\pi}_1:
\wtil{\fal}(G)_1\to\fB(\fH)$ is a $*$-homomorphism, thus so too is $\pi=\til{\pi}_1\circ\iota_1$.
\end{proof}

\begin{remark}\label{rem:contfal}
{\bf (i)}  If $G$ is totally disconnected, then the above result is elementary.  Indeed,
there is a neighbourhood base $\{H_\beta:\beta\in B\}$
for the topology of $G$ consisting of open subgroups.  Since $\spn\{1_{sH_\beta}:s\in G,\beta\in B\}$
is dense in $\fal(G)$, and each $\iota_1(1_{sH_\beta})$ is a contractive idempotent in 
an operator algebra, it follows that $\wtil{\fal}(G)_1$ is a C*-algebra, whence isomorphic to $\fC_0(G)$.

{\bf (ii)}  If the connected component of the identity of $G$ is a small invariant neighbourhood group,
i.e.\ a SIN group, then it is shown in \cite[\S 6]{brannans} that any completely contractive
representation $\pi:\fal(G)\to\fB(\fH)$ is automatically a $*$-representation.  
Notice that a connected SIN group is the direct product of a vector group and a compact group thanks to
the Freudenthal-Weil theorem (\cite[16.4.6]{dixmierB}), and, in particular, is amenable.
\end{remark}

We recall that bounded measures act on $\bl^1(G)$ by convolution.  In particular, for a Dirac measure
$\del_x$ and $v$ in $\bl^1(G)$ we have for a.e.\ $y$ in $G$ that $\del_x\ast v(y)=v(x^{-1}y)$ and
$v\ast\del_x=\frac{1}{\Del(x)}v(yx^{-1})$, where $\Del$ is the Haar modular function.

\begin{definition} A group has the {\it quasi-small invariant neighbourhood property} 
(in short, is QSIN) provided
that $\bl^1(G)$ admits a quasi-central bounded approximate identity, i.e.\ a bounded approximate identity
$(v_\alp)$ for which
\begin{equation}\label{eq:qsin}
\lim_\alp\norm{\del_x\ast v_\alp\ast\del_{x^{-1}}-v_\alp}_1=0
\text{ uniformly for }x\text{ in compact subsets of }G.
\end{equation}
\end{definition}

\begin{remark}\label{rem:scope}
{\bf (i)}  The QSIN property implies that $\bl^\infty(G)$ admits a conjugation invariant 
mean which resticts on continuous functions to evaluation at $e$; see Losert and Rindler \cite{losertr}.
The property of admitting a conjugation invariant mean is called {\it inner amenability};
this is not to be confused with the terminology in much of the literature in which a discrete group
is called inner amenable if $\ell^\infty(G)$ admits a non-trivial (i.e. not extending the evaluation at $e$) 
conjugation invariant mean.

Any amenable or SIN group is QSIN; see \cite{losertr} and Mosak \cite{mosak}, respectively.
The connected component of the identity of a QSIN group must be amenable, as is  
shown in \cite{losertr}.  In particular, a connected group enjoys the QSIN property if and only
if it is amenable.  

{\bf (ii)}  A result of Lau and Paterson (\cite[Corollary 3.2]{laup})
tells us that {\it $G$ is amenable if and only if $\vn(G)$ is injective and $G$ is inner amenable}.
Consider a non-compact reductive group over the $p$-adic numbers, $\mathrm{G}(\Que_p)$, 
which is non-amenable and totally disconnected.  
Any such group is a type I group by a result of Bern\v{s}te\u{\i}n \cite{bernshtein}, 
and hence admits an injective von Neumann algebra (see the survey paper \cite{paterson}).  
Hence $\mathrm{G}(\Que_p)$ provides a totally disconnected non-inner amenable, hence
non-QSIN, group.

{\bf (iii)} The existence of an open normal QSIN (even compact abelian) 
subgroup is not sufficient for a group to be QSIN.  We note two classes of examples
shown in \cite{losertr} of non-QSIN groups. 
For each $n>1$, consider $\Tee^n\rtimes\mathrm{SL}_n(\Zee)$.
Also, if $\Gamma$ is a non-amenable discrete group
and $K$ any compact group, consider the wreath product $K^\Gamma\rtimes \Gamma$.
In each example, the normalized Haar measure on $\Tee^n$, respectively $K^\Gamma$, provides a 
conjugation-invariant mean for the group, hence each group is inner amenable.

{\bf (iv)} Let us note another class of  non-QSIN, examples.  
Let $K$ be any local field and $\Gam$ be a subgroup of $\mathrm{GL}_2(K)$, 
containing a countable subgroup $\Gam_0$ whose closure $\wbar{\Gam_0}$ in 
$\mathrm{GL}_2(K)$ admits a non-solvable open subgroup.
Consider $G=K^2\rtimes\Gam$, where $\Gam$ is treated as a discrete group.
If there were a net $(u_\alp)$ satisfying (\ref{eq:qsin}), we may suppose its elements are supported in the open subgroup $K^2$; see (\ref{eq:stokke}), below.   Any cluster point of this net in $\bl^\infty(K^2)^*$
would restrict to an $\Gam$-invariant mean.  Now $K^2\setminus\{0\}
\cong \mathrm{GL}_2(K)/S$, where $S$ is the stabilizer subgroup of some point in 
$K^2\setminus\{0\}$, which is solvable, hence amenable.  
The Connes-Sullivan-Zimmer result of Breuillard and Gelander
\cite[1.10]{breuillardg} tells us that $\mathrm{SL}_2(K)/S$ is not an amenable $\Gam_0$-space,
and hence $\bl^\infty(K^2)$ cannot admit a $\Gam$-invariant mean.  
\end{remark}

The following is the main result of this note.

\begin{theorem}\label{theo:main}
Let $G$ be a QSIN group.
Then for any completely bounded homomorphism $\pi:\fal(G)\to\fB(\fH)$
there is an invertible $S$ in $\fB(\fH)$ for which 

{\bf (a)} $\pi_S=S\pi(\cdot)S^{-1}$ is a 
$*$-representation of $\fal(G)$:  $\pi_S(\bar{u})=\pi_S(u)^*$; and 

{\bf (b)} $\|S\|\|S^{-1}\|\leq \cbnorm{\pi}^2$.
\end{theorem}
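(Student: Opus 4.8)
The plan is to invoke Theorem~\ref{theo:sim} with $N=2$ and $K=1$, applied to $\fA=\fal(G)$. By Proposition~\ref{prop:contfal} (whose hypothesis is satisfied, since the connected component of the identity of a QSIN group is amenable by Remark~\ref{rem:scope}(i)), we know that $\wtil{\fal}(G)_1\cong\fC_0(G)$ via the extension of the inclusion $j$. Hence the whole theorem reduces to showing that the linearized multiplication map
\[
m_{2,1}:\fal(G)\otimes^h\fal(G)\to\wtil{\fal}(G)_1\cong\fC_0(G),\quad u\otimes v\mapsto uv,
\]
is a complete quotient map; once this is established with quotient constant $1$, Theorem~\ref{theo:sim} yields an invertible $S$ with $\cbnorm{S\pi(\cdot)S^{-1}}\leq 1$ and $\|S\|\|S^{-1}\|\leq\cbnorm{\pi}^2$, and the complete contractivity of $S\pi(\cdot)S^{-1}$ together with the identification $\wtil{\fal}(G)_1\cong\fC_0(G)$ forces $S\pi(\cdot)S^{-1}$ to be a $*$-representation (any completely contractive, equivalently contractive, homomorphism of the commutative C*-algebra $\fC_0(G)$ is a $*$-homomorphism). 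This gives both (a) and (b).

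So the heart of the matter is the factorization statement: every element of the unit ball of $M_n\otimes\fC_0(G)$ should be, up to arbitrarily small error, of the form $\id_n\otimes m_{2,1}$ applied to an element of the unit ball of $M_n\otimes(\fal(G)\otimes^h\fal(G))$. The natural strategy is to mimic the known dual statement for $\bl^1(G)$ of amenable groups (the third named author's $m_{2,1}:\bl^1(G)\otimes^h\bl^1(G)\to\mathrm{C}^*(G)$ being a complete quotient, cited in the Remark after Theorem~\ref{theo:sim}), but now on the Fourier-algebra side, and crucially using only the QSIN hypothesis rather than amenability. I would first reduce to a single function $w\in\fC_0(G)$ with $\|w\|_\infty<1$ (the matricial case should follow by the same argument applied entrywise with a common approximate-identity index, or by a standard operator-space Hahn–Banach/density argument) and try to write $w$ as a Haagerup-type product $w=\sum_i u_i v_i$ with the appropriate row/column norms controlled by $1$. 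The QSIN property enters through the quasi-central bounded approximate identity $(v_\alp)$ for $\bl^1(G)$ from \eqref{eq:qsin}: applying the Fourier/operator-space machinery, one manufactures from $(v_\alp)$ a net in $\fal(G)\hat\otimes\fal(G)$, or rather in the Haagerup tensor product, whose image under multiplication approximates $w$, with the quasi-centrality condition ensuring that the relevant completely bounded norms do not blow up (this is the place where, in the earlier works of Brannan--Samei and Brannan--Daws--Samei, a constant worse than $1$, leading to degree $4$, crept in; the improvement to degree $2$ should come from a sharper, more symmetric choice here).

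I expect the main obstacle to be precisely the \emph{sharp} constant in this complete-quotient statement: getting $K=1$ rather than merely $K<\infty$. Producing \emph{some} factorization $w=\sum u_iv_i$ through the Haagerup tensor product is comparatively soft once one has a bounded approximate identity with good invariance, but arranging that the row norm $\|(u_i)\|$ and column norm $\|(v_i)\|$ multiply to something $\leq\|w\|_\infty+\eps$ (and likewise at all matrix levels) is delicate. The likely device is a clever use of the square root: writing things so that $u_i$ and $v_i$ are built symmetrically from $w^{1/2}$ times pieces of the approximate identity, exploiting that $\fC_0(G)$ is commutative and that $\fal(G)$ is conjugation-closed and dense in $\fC_0(G)$, so that $w^{1/2}$ and products like $w^{1/2}v_\alp^\vee$ (or their Fourier-side analogues) lie in $\fal(G)$ with controlled norm. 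The QSIN condition \eqref{eq:qsin} should be exactly what is needed to control the error term $\del_x\ast v_\alp\ast\del_{x^{-1}}-v_\alp$ arising when one commutes $w$ past the approximate identity uniformly on the compacta where $w$ is essentially supported. Verifying all of this at the level of complete boundedness, i.e.\ for $M_n\otimes\fC_0(G)$ simultaneously with uniform constants, is the step I would budget the most care for.
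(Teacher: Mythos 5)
Your reduction is exactly the paper's: Proposition~\ref{prop:contfal} gives $\wtil{\fal}(G)_1\cong\fC_0(G)$ (since QSIN forces $G_e$ amenable), and Theorem~\ref{theo:sim} with $N=2$, $K=1$ then delivers both (a) and (b) once one knows that $m_{2,1}:\fal(G)\otimes^h\fal(G)\to\fC_0(G)$ is a complete quotient map with constant $1$. Up to that point there is nothing to object to. But that complete-quotient statement \emph{is} the theorem, and your proposal does not prove it: everything after ``the heart of the matter'' is a description of what a direct factorization argument would have to achieve (write each $w$ in the open unit ball of $M_n\otimes\fC_0(G)$ as $\sum_i u_iv_i$ with row and column norms multiplying to at most $\|w\|+\eps$, uniformly over all matrix levels), together with an acknowledgement that you do not see how to get the constant $1$, nor even how the QSIN hypothesis concretely enters. ``The likely device is a clever use of the square root'' is not an argument; in particular nothing in your sketch explains why $w^{1/2}$ times translates of an approximate identity should have controlled $\fal(G)$-norm (membership of $w^{1/2}v_\alpha^{\vee}$ in $\fal(G)$ with norm close to $\|w\|^{1/2}$ is precisely the kind of claim that fails or is very hard to verify for nonabelian $G$), and nothing addresses the matricial levels beyond hoping they ``follow entrywise.''

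The paper avoids constructing any factorization at all by dualizing. By \cite[4.1.9]{effrosrB}, $m_{2,1}$ is a complete quotient map iff its adjoint $m^*:\meas(G)\to\vn(G)\otimes^{eh}\vn(G)$, $m^*(\mu)=\int_G\lam(s)\otimes\lam(s)\,d\mu(s)$, is a complete isometry. Since $m^*$ is automatically completely contractive ($\meas(G)$ being maximal), one only needs a \emph{lower} bound, and for that it suffices to test $m^*(\mu)$, viewed as a normal completely bounded map on $\fB(\bl^2(G))$, against a well-chosen subspace of operators. The paper takes the operators $UM(f)$, $f\in\fC_0(G)$ (with $U$ the inversion unitary), and the unit vectors $\xi_\alpha=u_\alpha^{1/2}$ built from Stokke's quasi-central approximate identity; the QSIN condition enters exactly through the estimate $\|\lam(x)\rho(x)\xi_\alpha-\xi_\alpha\|_2\to 0$ uniformly on compacta, which yields $\langle m^*(\mu)(UM(f))\xi_\alpha|\xi_\alpha\rangle\to\int_G f\,d\mu$ and hence the isometry, with the matricial case following by the same computation tensored with finite-dimensional spaces. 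This is the idea your proposal is missing: the sharp constant $K=1$ is obtained not by a delicate primal factorization but by recognizing that the dual statement is an isometric one that can be verified on a single concrete family of test operators and vectors. As written, your argument has a genuine gap at its central step.
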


\begin{remark}\label{rem:improve}
For a QSIN group, we have that $\fal(G)$
satisfies the similarity property, with completely bounded similarity degree $d_{\cb}(\fal(G))\leq 2$.  
Hence our result significantly improves upon \cite[Theorem 20]{brannans} where,  for SIN groups, 
the estimate $d_{\cb}(\fal(G))\leq 4$ is achieved; and upon \cite[Theorem 8.2]{brannands}
where the same is shown for any amenable group admitting an open SIN-subgroup.
\end{remark}

\begin{proof}
The QSIN property implies that the connected component of $G$ is amenable, as noted
in Remark \ref{rem:scope} (i), above.
Thanks to Theorem \ref{theo:sim} and the fact that $\wtil{\fal}(G)_1\cong\fC_0(G)$ for any group 
whose connected component of the identity is amenable as shown in Proposition \ref{prop:contfal}, it suffices
to show that 
\begin{equation}\label{eq:multcs}
m=m_{2,1}:\fal(G)\otimes^h\fal(G)\to\fC_0(G)\text{ is a complete quotient map.}
\end{equation}
It is easy to see that the adjoint of this map, $m^*:\meas(G)\to\vn(G)\otimes^{eh}\vn(G)\subset
\fC\fB^\sig(\fB(\bl^2(G))$ -- see the article of Blecher and Smith \cite{blechers} --
is given by 
\[
m^*(\mu)=\int_G \lam(s)\otimes\lam(s)\,d\mu(s)
\]
i.e.\ $\langle m^*(\mu)T\xi|\eta\rangle=\int_G\langle \lam(s)T\lam(s)\xi|\eta\rangle$
for $T$ in $\fB(\bl^2(G))$ and $\xi,\eta$ in $\bl^2(G)$.
Hence by duality -- see, for example, \cite[4.1.9]{effrosrB} -- statement (\ref{eq:multcs}) will hold once
we verify that
\begin{equation}\label{eq:multcsd}
\text{the  map }m^*\text{ is completely isometric.}
\end{equation}
We observe that $m^*$ is clearly contractive, and hence completely contractive as $\meas(G)$
is a maximal operator space.  Hence it suffices to exhibit a subspace $\fS$ of $\fB(\bl^2(G))$
for which $m^*:\meas(G)\to\fC\fB(\fS,\fB(\bl^2(G)))$ is a complete isometry, to see that 
$m^*$ itself is a complete isometry.

To prove (\ref{eq:multcsd}), we shall require a particular net of  unit vectors in $\bl^2(G)$, 
also arising from the QSIN assumption.   Thanks to 
Stokke \cite[Theorem 2.6]{stokke}, any QSIN group admits a net $(v_\alp)$ in $\bl^1(G)$ satisfying
(\ref{eq:qsin}) and also the conditions
\begin{equation}\label{eq:stokke}
 v_\alp\geq 0\text{ for each }\alp\text{, and } 
 \supp v_\alp\searrow\{e\}
\end{equation}
where by the latter condition, we mean that given any neighbourhood of the identity,
$\supp v_\alp$ is eventually contained in that neighbourhood.  We may normalize so that
$\int_G v_\alp=1$ for each $\alp$.  We set $u_\alp=\frac{1}{2}(v_\alp+v_\alp^*)$, so that 
\[
u_\alp=u_\alp^*,\,u_\alp\geq 0\text{ and }\int_G u_\alp=1\text{ for each }\alp\text{, and } 
\supp u_\alp\searrow\{e\}.  
\]
Now let
\[
\xi_\alp=u_\alp^{1/2}\text{ in }\bl^2(G).
\]
It is clear that each $\xi_\alp$ is a unit vector.  Let $\lam,\rho:G\to\fB(\bl^2(G)$ denote the left and right 
regular representations.  Using the inequality for positive scalars $|a-b|^2\leq |a^2-b^2|$, we see that
\begin{align}\label{eq:twoqsin}
&{\norm{\lam(x)\rho(x)\xi_\alp-\xi_\alp}_2}^2
=\int_G |\sqrt{\Delta(x)}\xi_\alp(x^{-1}sx)-\xi_\alp(s)|^2\,ds \notag \\
&\qquad\leq \int_G |\Delta(x)u_\alp(x^{-1}sx)-u_\alp(s)|\,ds
=\norm{\del_x\ast u_\alp\ast\del_{x^{-1}}-u_\alp}_1
\end{align}
which tends to $0$ in $\alp$, uniformly for $x$ in compact subsets of $G$.  

We let $U$ denote the unitary on $\bl^2(G)$ given by $U\xi=\frac{1}{\Delta^{1/2}}\check{\xi}$, i.e.\
$U\xi(s)=\frac{1}{\sqrt{\Delta(s)}}\xi(s^{-1})$.  Notice that $U^*=U$ and
$U$ intertwines the right and left regular 
representations:  $U\lam(t)=\rho(t)U$.  Also, for the vectors $\xi_\alp$, above, $U\xi_\alp=\xi_\alp$.
If $f\in\fC_0(G)$, we let $M(f)$ denote the operator of multiplication by $f$ on $\bl^2(G)$.  It is standard to compute that
\begin{gather*}
\lam(t)UM(f)\lam(t)\xi(s)=f(s^{-1}t)\sqrt{\frac{\Delta(t)}{\Delta(s)}}\xi(t^{-1}s^{-1}t),\\
\text{i.e.\ }
\lam(t)UM(f)\lam(t)=M_{\del_t\ast\check{f}}U\lam(t)\rho(t).
\end{gather*}
Hence it follows that
\begin{align}\label{eq:stuff}
\langle m^*(\mu)(UM(f))\xi_\alp|\xi_\alp\rangle
&=\int_G \int_G f(s^{-1}t)\sqrt{\frac{\Delta(t)}{\Delta(s)}}\xi_\alp(t^{-1}s^{-1}t)\wbar{\xi_\alp(s)}\,d\mu(t)\,ds 
\notag \\
&=\int_G \int_G f(s^{-1}t)\sqrt{\frac{\Delta(t)}{\Delta(s)}}\xi_\alp(t^{-1}s^{-1}t)\xi_\alp(s)\,ds\,d\mu(t) \\
&=\int_G\left[\int_G \del_t\ast\check{f} [\lam(t)\rho(t)\xi_\alp]\xi_\alp\right]\,d\mu(t) \notag
\end{align}
where we have used Fubini's theorem in the second equation.  Let us write
\[
w_{t,\alp}=[\lam(t)\rho(t)\xi_\alp]\xi_\alp\text{ in }\bl^1(G).
\]
We have that
\begin{align*}
\left|\int_G (\del_t\ast\check{f}) w_{t,\alp}-f(t)\right|
&=\left|\int_G \left[(\del_t\ast\check{f}) w_{t,\alp}-f(t)u_\alp\right]\right| \\
&\leq \int_G \Bigl[|\del_t\ast\check{f}||w_{t,\alp}-u_\alp|+|\del_t\ast\check{f}-f(t)|u_\alp\Bigr] \\
&\leq \norm{f}_\infty  \int_G |\lam(t)\rho(t)\xi_\alp-\xi_\alp|\xi_\alp + 
\sup_{s\in \supp u_\alp}|f(s^{-1}t)-f(t)| 
\end{align*}
Thanks to Cauchy-Schwarz inequality and (\ref{eq:twoqsin}), as well as uniform continuity of $f$
and (\ref{eq:stokke}), this quantity tends to zero in $\alp$, uniformly for $t$ in compact subsets of $G$.
It other words
\[
\lim_\alp\int_G \del_t\ast\check{f} [\lam(t)\rho(t)\xi_\alp]\xi_\alp=f(t)
\]
uniformly for $t$ in compact subsets of $G$, and, since this net is uniformly bounded, 
the quantities of (\ref{eq:stuff})
tend in $\alp$ to $\int_G f\,d\mu$.  This proves that the map 
$m^*:\meas(G)\to\fB(UM(\fC_0(G)),\fB(\bl^2(G)))$ is an isometry.

Let us now increase dimension.  Given $\mu$ in $M_n\otimes\meas(G)$ and
$f$ in $M_m\otimes\fC_0(G)$ we consider arbitrary unit vectors $\xi$ and $\eta$ in 
$\ell^2_n\otimes^2\ell^2_m$
($nm$-dimensional Hilbert space).  As above we have that
\begin{align*}
\lim_\alp\langle \id_n\otimes&\id_m\otimes m^*(\mu)[\id_m\otimes UM(f)]\xi\otimes\xi_\alp
|\eta\otimes\xi_\alp\rangle \\
&=\lim_\alp\left.\left\langle \id_m\otimes\id_n\left(\int_G (\del_t\ast\check{f}) w_{t,\alp} \otimes d\mu\right)
F\xi\right|F\eta\right\rangle \\
&=\left.\left\langle \id_m\otimes\id_n\left(\int_G f \otimes d\mu\right)F\xi
\right|F\eta\right\rangle
\end{align*}
where $F:\ell^2_n\otimes^2\ell^2_m\to\ell^2_m\otimes^2\ell^2_n$ is the flip map, and, in the doubly indexed
matrix notation of Effros and Ruan \cite{effrosrB}, $\int_G f \otimes d\mu=\left[\int_G f_{kl}\,d\mu_{ij}\right]$
in $M_{mn}$.
Thus $\id_n\otimes m^*:M_n\otimes\meas(G)\to\fB(M_m\otimes UM(\fC_0(G)),M_n\otimes M_m
\otimes\fB(\bl^2(G)))$ is an isometry for each $n,m$.  This shows that $m^*:\meas(G)\to
\fC\fB(UM(\fC_0(G)),\fB(\bl^2(G)))$ is a complete isometry.
\end{proof}

\begin{remark}\label{rem:relations}
{\bf (i)} The relation (\ref{eq:multcsd}) stands in curious comparison with a result of 
Neufang, Ruan and the third named author \cite{neufangrs}, which is equivalent to having that
\[
\mu\mapsto\int_G\lam(s)\otimes\lam(s^{-1})\,d\mu:\meas(G)\to\vn(G)\otimes^{eh}\vn(G)
\]
is a complete isometry for any locally compact group.  The unique isometric linear
weak*-continuous extension of
$\lam(s)\mapsto\lam(s^{-1})$ on $\vn(G)$, which is the adjoint of $u\mapsto\check{u}$ on $\fal(G)$,
is completely bounded only when $G$ is virtually abelian, thanks to an observation of
Forrest and Runde \cite[Proposition 1.5]{forrestr}.  In particular, we know of no means of deducing
our Theorem \ref{theo:main} from this result.

{\bf (ii)}  If $G$ is amenable, we may gain Theorem \ref{theo:main} (a)
from the Theorem of Ruan \cite{ruan}, showing that $\fal(G)$ is operator amenable
(which was used in \cite{brannands}, and hence implicitly in the proof of
Proposition \ref{prop:contfal}, above) and
similarity theorem of Marcoux and Popov \cite{marcouxp}, for commutative
operator amenable operator algebras.  Marcoux and Popov use the
stronger assumption of amenability, instead of operator amenability, in their statements.
Hence we give a sketch of proof.

First, thanks to \cite{ruan},
$\fal(G)$ is operator amenable.   
Hence the commutative operator algebra $\fA=\wbar{\pi(\fal(G))}$ is operator 
amenable and admits a bounded approximate diagonal $(W_\alp)$ in its operator projective
tensor product $\fA\hat{\otimes}\fA$.  Let $\rho:\fA\to\fB(\fK)$
be any non-degenerate completely bounded homomorphism.  We hence consider $\fB(\fK)$ as a
$\fA\hat{\otimes}\fA$-module via $(A\otimes B)\cdot S=\rho(A)S\rho(B)$.
Let $\fM$ be a closed $\rho$-invariant subspace of $\fK$ with orthogonal projection $P$.
It is easy to check that any cluster point $E$ of the net $(W_\alp\cdot P)$ is an idempotent
in $\fB(\fK)$ with range $\fM$ and is an $\fA$-module map:  $E\rho(A)\xi=\rho(A)E\xi$
for $A$ in $\fA$ and $\xi$ in $\fK$.  (This is Helemski\u{\i}'s splitting technique;
see \cite{helemskii,curtisl}.)  Hence $\fA$ satisfies what we might call the
{\it completely bounded total reduction property}.  In particular, for each $A$ in $\fA$, 
the graph $\fM_A=\{(\xi,A\xi):\xi\in\fH\}\subset\fH^2$ is invariant under the 
completely bounded homomorphism $B\mapsto (B\xi,AB\xi)$ of $\fA$ on $\fM_A$.  
The reducibility of modules $\fM_A$ is what is required to give the proof of the main
result of \cite{marcouxp}, that there is a $T$ in $\fB(\fH)$ for which
$T\fA T^{-1}$ is a C*-algebra.  Hence $\pi_T=T\pi(\cdot)T^{-1}:\fal(G)\to T\fA T^{-1}$ is a 
$*$-homomorphism.  Indeed, if $\chi$ is a character on $T\fA T^{-1}$, then $\pi_T^*(\chi):\fal(G)\to\Cee$
is given by pointwise evaluation at an an element $x$ in $G$.  Hence, for $u$ in $\fal(G)$,
$\chi\circ\pi_T(\bar{u})=\bar{u}(x)=\wbar{u(x)}=\wbar{\chi\circ\pi_T(u)}=\chi(\pi_T(u)^*)$.  As this holds
for every character, we see that $\pi_T(\bar{u})=\pi_T(u)^*$.

This technique does not give us obvious means to determine if $d_\cb(\fal(G))\leq 2$.
\end{remark}

We wish to give a modest partial extension of  Theorem \ref{theo:main}.   
The following lemma will be required for this.  We shall use, for an operator space $\fV$,
the spaces $T_n(\fV)$ of Effros and Ruan \cite{effrosrB}.  We shall not require their exact
definition, but shall require the duality formulas
$T_n(\fV)^*\cong M_n\otimes\fV^*$ and $(M_n\otimes\fV)^*\cong T_n(\fV^*)$ (\cite[4.1.1]{effrosrB})
and the fact that each $T_n(\fV_0)\subset T_n(\fV)$ isometrically, if $\fV_0$ is a subspace of $\fV$
(\cite[4.1.8 (ii)]{effrosrB}).  We shall also require the {\it reduced Fourier-Stieltjes algebra}
$\fsal_r(G)$.  This is the algebra of functions on $G$ which consists of
all matrix coefficients of all representations weakly contained in the left regular representation.
Further, it may be identified with the dual of the reduced C*-algebra $\mathrm{C}^*_r(G)$, 
and satisfies that
$\fal(G)$ is $\sig(\fsal_r(G),\mathrm{C}^*_r(G))$-dense in $\fsal_r(G)$ (\cite[(2.16) \& (3.4)]{eymard}).

\begin{lemma}\label{lem:extrfs}
Let $\pi:\fal(G)\to\fB(\fH)$ be a non-degenerate completely bounded homomorphism.  Then there
is a homomorphism $\bar{\pi}:\fsal_r(G)\to\fB(\fH)$ such that
$\bar{\pi}|_{\fal(G)}=\pi$ and $\cbnorm{\bar{\pi}}=\cbnorm{\pi}$.
\end{lemma}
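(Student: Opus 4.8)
The plan is to extend $\pi$ to $\fsal_r(G)$ by the natural multiplier formula — using that $\fal(G)$ sits inside $\fsal_r(G)$ as a weak*-dense ideal — and then to control the completely bounded norm by an operator-space duality argument carried out at every matrix level. Write $c=\cbnorm\pi$.

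First I would record the structural facts about the inclusion. Since $\fal(G)$ is $\sig(\fsal_r(G),\mathrm{C}^*_r(G))$-dense in $\fsal_r(G)=\mathrm{C}^*_r(G)^*$, a Hahn--Banach argument shows this inclusion is isometric, and in fact completely isometric: at level $n$ one reads this off from $T_n(\mathrm{C}^*_r(G))^*\cong M_n\otimes\fsal_r(G)$, $(M_n\otimes\fal(G))^*\cong T_n(\vn(G))$, and the isometric inclusion $T_n(\mathrm{C}^*_r(G))\subseteq T_n(\vn(G))$, together with the bipolar observation that a weak*-dense subspace has its $r$-ball weak*-dense in the $r$-ball of the ambient dual. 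I would also note: $\fal(G)$ is a norm-closed ideal of $\fsal_r(G)$; the product of $\fsal_r(G)$ is separately weak*-continuous, multiplication by $w$ being the adjoint of the bounded $\mathrm{C}^*_r(G)$-module map $\lam(f)\mapsto\lam(wf)$ on $\mathrm{C}^*_r(G)$; and $\fal(G)\cdot\mathrm{C}^*_r(G)\subseteq\mathrm{C}^*_r(G)$ with dense range (since $v\cdot\lam(f)=\lam(vf)$, $vf\in\mathrm{L}^1(G)$, and $\lam(\mathrm{C}_c(G))$ is hit).

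Next, the construction. On the dense subspace $\spn\{\pi(v)\xi:v\in\fal(G),\ \xi\in\fH\}$ I would set $\bar\pi(w)\bigl(\sum_k\pi(v_k)\xi_k\bigr):=\sum_k\pi(wv_k)\xi_k$, which is legitimate because $wv_k\in\fal(G)$. Well-definedness uses non-degeneracy and the ideal property: if $\sum_k\pi(v_k)\xi_k=0$, then for every $v'\in\fal(G)$ one has $\pi(v')\sum_k\pi(wv_k)\xi_k=\sum_k\pi((v'w)v_k)\xi_k=\pi(v'w)\sum_k\pi(v_k)\xi_k=0$. The relation $\bar\pi(w)\pi(v)=\pi(wv)$ and the multiplicativity $\bar\pi(w_1)\bar\pi(w_2)=\bar\pi(w_1w_2)$ on this subspace are then immediate, and once each $\bar\pi(w)$ is known to be bounded, multiplicativity on all of $\fsal_r(G)$ follows from a two-step weak*-density argument exploiting the separate weak*-continuity of the products of $\fsal_r(G)$ and of $\fB(\fH)$.

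The remaining, and main, point is the estimate $\norm{(\id_n\otimes\bar\pi)(w)}\leq c\,\norm{w}$ for $w\in M_n\otimes\fsal_r(G)$. I would pick a net $(u_i)$ in the $\norm{w}$-ball of $M_n\otimes\fal(G)$ with $u_i\to w$ weak* in $M_n\otimes\fsal_r(G)=T_n(\mathrm{C}^*_r(G))^*$ (ball density), so that $\norm{(\id_n\otimes\pi)(u_i)}\leq c\,\norm{w}$, and then argue that $(\id_n\otimes\bar\pi)(w)$ is a weak-operator cluster point of the operators $(\id_n\otimes\pi)(u_i)$; lower semicontinuity of the operator norm then yields the bound, whence $\cbnorm{\bar\pi}\leq c$, while $\cbnorm{\bar\pi}\geq\cbnorm{\bar\pi|_{\fal(G)}}=c$ gives equality. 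Identifying the cluster point with $(\id_n\otimes\bar\pi)(w)$ comes down to the identity $T\cdot(\id_n\otimes\pi)(b)=(\id_n\otimes\pi)(wb)$ for $b\in M_n\otimes\fal(G)$, i.e.\ to pushing the convergence $u_ib\to wb$ through the bounded map $\id_n\otimes\pi$. \emph{This is the hard part.} One gets $u_ib\to wb$ weak* in $M_n\otimes\fsal_r(G)$ for free (because $b$ acts on $\mathrm{C}^*_r(G)$ with values in $\mathrm{C}^*_r(G)$), but to feed this into $\id_n\otimes\pi$ one needs convergence in the weak topology of $M_n\otimes\fal(G)$; I expect this to be obtained by combining the weak*-density with a Mackey-topology/relative-weak-compactness argument and the observation that, although $\fal(G)$ has no bounded approximate identity once $G$ is non-amenable, the C*-algebra $\mathrm{C}^*_r(G)$ does, so that the required convergence can be routed through the module action of $\fal(G)$ on $\mathrm{C}^*_r(G)$ (this is exactly the mechanism that, in the abelian model $\fal(G)=\mathrm{L}^1(\what G)\subseteq\meas(\what G)=\fsal_r(G)$, is supplied by the approximate identity of $\mathrm{L}^1(\what G)$).
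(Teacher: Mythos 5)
Your overall architecture matches the paper's: norm-controlled weak* approximation of elements of $M_n\otimes\fsal_r(G)$ by elements of $M_n\otimes\fal(G)$ (Hahn--Banach into $\vn(G)^*\cong\fal(G)^{**}$ plus Goldstine, using the $T_n$ dualities), definition of $\bar{\pi}(w)$ through the module relation $\bar{\pi}(w)\pi(v)=\pi(wv)$, and the cb-norm estimate by lower semicontinuity of the norm along the approximating net. However, the step you yourself flag as ``the hard part'' is a genuine gap, and it is exactly where the paper imports a nontrivial external theorem. From $u_\alp\to w$ in $\sig(\fsal_r(G),\mathrm{C}^*_r(G))$ with $\sup_\alp\norm{u_\alp}_\fal<\infty$ you only get, for $v\in\fal(G)$, that $u_\alp v\to wv$ in $\sig(\fsal_r(G),\mathrm{C}^*_r(G))$; to push this through the bounded map $\pi$ you need convergence at least in $\sig(\fal(G),\vn(G))$, and the pairing $\langle u_\alp v,T\rangle=\langle u_\alp, v\cdot T\rangle$ only helps when $v\cdot T$ lies in $\mathrm{C}^*_r(G)$, which fails for general $T\in\vn(G)$ (the module action lands in the uniformly continuous functionals, not in $\mathrm{C}^*_r(G)$). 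Your proposed repair --- Mackey topology, relative weak compactness, the bounded approximate identity of $\mathrm{C}^*_r(G)$ --- is a gesture, not an argument, and does not obviously close this. The paper closes it with the theorem of Granirer and Leinert \cite{granirerl}: on bounded subsets of $\fsal_r(G)$ the weak* topology coincides with the multiplier topology, so in fact $\norm{u_\alp v-wv}_\fal\to 0$ in \emph{norm}, and $\pi(u_\alp v)\to\pi(wv)$ follows trivially. This is the single analytic input your proof is missing.

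A smaller but real issue is your well-definedness argument. From $\sum_k\pi(v_k)\xi_k=0$ you show that $\eta=\sum_k\pi(wv_k)\xi_k$ satisfies $\pi(v')\eta=0$ for all $v'\in\fal(G)$ and conclude $\eta=0$ ``by non-degeneracy.'' For a $*$-representation these conditions are equivalent, but for a general bounded homomorphism of an algebra without a bounded approximate identity --- and $\fal(G)$ has none when $G$ is non-amenable --- density of $\pi(\fal(G))\fH$ in $\fH$ does not imply that the joint kernel $\{\eta:\pi(v')\eta=0\ \text{for all}\ v'\}$ is trivial. The paper sidesteps this by producing $\bar{\pi}(u)$ directly as a weak* cluster point of the bounded net $(\pi(u_\alp))$, so that boundedness is automatic and density of $\pi(\fal(G))\fH$ is used only to identify the cluster point uniquely via $\bar{\pi}(u)\pi(v)=\pi(uv)$; you would do well to adopt that order of construction.
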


\begin{proof}
The structure of this proof is based on some ideas of Lau and Losert in \cite[\S 4]{laul}.
Let us first show that for any $n$, any $u$ in $M_n\otimes\fsal_r(G)$
may be $\sig(M_n\otimes\fsal_r(G),T_n(\mathrm{C}^*_r(G)))$-approximated by a 
net $(u_\alp)$ from $M_n\otimes\fal(G)$ for which each
$\norm{u_\alp}_{M_n\otimes\fal}=\norm{u}_{M_n\otimes\fsal_r}$.
Indeed, The Hahn-Banach theorem provides  
$\til{u}$ in $M_n\otimes(\vn(G)^*)\cong T_n(\vn(G))^*$ which
resticts to $u$ on $T_n(\mathrm{C}^*_r(G))$.  Then Goldstine's theorem
provides a net $(u_\alp')$ from $M_n\otimes\fal(G)$, which 
$\sig(M_n\otimes(\vn(G)^*),T_n(\vn(G)))$-approaches $u$, and which satisfies
$\norm{u_\alp'}_{M_n\otimes\fal}\leq\norm{u}_{M_n\otimes\fsal_r}$ for each $\alp$.
Let each $u_\alp=\frac{\norm{u}_{M_n\otimes\fsal_r}}{\norm{u_\alp'}_{M_n\otimes\fal}}u_\alp'$.

Now suppose $n=1$.  We let $u\in\fsal_r(G)$ and $(u_\alp)$ be the net from
$\fal(G)$ which is promised above.  A theorem of Granirer and Leinert \cite{granirerl}
tells us that $\lim_\alp\norm{u_\alp v-uv}_\fal=0$ for each $v$ in $\fal(G)$.
The net $(\pi(u_\alp))$, being bounded in $\fB(\fH)$, admits a weak* cluster point $\bar{\pi}(u)$, so
$\bar{\pi}(u)=\text{w*-}\lim_\beta\pi(u_{\alp(\beta)})$ for some subnet
$(u_{\alp(\beta)})$.  Then for $v$ in $\fal(G)$ we have 
$\bar{\pi}(u)\pi(v)=\text{w*-}\lim_\beta\pi(u_{\alp(\beta)}v)=\pi(uv)$, which, by nondegeneracy of $\pi$
tells us that $\bar{\pi}(u)$ is the unique cluster point of $(\pi(u_\alp))$.  Furthermore,
if also $u'\in\fsal_r(G)$, then we likwise see that $\bar{\pi}(u')\bar{\pi}(u)\pi(v)=
\bar{\pi}(u')\pi(uv)=\pi(u'uv)$, so $\bar{\pi}:\fsal_r(G)\to\fB(\fH)$ is a homomorphism.

To compute the norm, we let $n$ be arbitrary, and notice  that
for $u$ and $(u_\alp)$ as in the first paragraph, we have
\[
\norm{\id_n\otimes \bar{\pi}(u)}_{M_n\otimes\fB(\fH)}
\leq \limsup_\alp \norm{\id_n\otimes \bar{\pi}(u_\alp)}_{M_n\otimes\fB(\fH)} 
\leq \cbnorm{\pi}\norm{u}_{M_n\otimes\fsal_r}
\]
so $\cbnorm{\bar{\pi}}\leq\cbnorm{\pi}$.  The converse inequality is trivial.
\end{proof}

The class of groups
in the hypotheses below covers some non-QSIN groups, for example the groups mentioned in 
Remark \ref{rem:scope} (iii) and (iv).  Normality is essential to this particular proof, so we do not capture
any of the groups of Remark \ref{rem:scope} (ii).

\begin{corollary}\label{cor:simsix}
Suppose $G$ admits an amenable open normal subgroup $H$.  
Then for any completely bounded homomorphism $\pi:\fal(G)\to\fB(\fH)$
there is an invertible $T$ in $\fB(\fH)$ for which 

{\bf (a)} $\pi_T=T\pi(\cdot)T^{-1}$ is a 
$*$-representation of $\fal(G)$; and 

{\bf (b)} $\|T\|\|T^{-1}\|\leq \cbnorm{\pi}^6$.
\end{corollary}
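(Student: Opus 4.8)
The plan is to reduce the problem, via the coset structure of $\fal(G)$ relative to $H$, to Theorem~\ref{theo:main} for the amenable group $H$. Write $Q=G/H$, which is discrete since $H$ is open, and fix a transversal $\{g_q:q\in Q\}$; by normality $g_qH=Hg_q$. First I would reduce to the case that $\pi$ is non-degenerate, and then invoke Lemma~\ref{lem:extrfs} to extend $\pi$ to a completely bounded homomorphism $\overline{\pi}:\fsal_r(G)\to\fB(\fH)$ with $\cbnorm{\overline{\pi}}=\cbnorm{\pi}$. The reason for passing to the reduced Fourier--Stieltjes algebra is that, since $H$ is amenable, the quasi-regular representation $\lambda_{G/H}$ is weakly contained in $\lambda_G$; hence every coset indicator $1_{g_qH}$ lies in $\fsal_r(G)$, and in fact $1_{g_qH}(\cdot)=\langle\lambda_{G/H}(\cdot)\delta_{q^{-1}},\delta_e\rangle=\langle\lambda_{G/H}(\cdot)\delta_e,\delta_q\rangle$. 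These two descriptions of $1_{g_qH}$ as a matrix coefficient — the first with the vector varying in the first slot, the second with it varying in the second — are what will control the estimates below.

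Set $P_q=\overline{\pi}(1_{g_qH})$. The $1_{g_qH}$ are mutually orthogonal idempotents of $\fsal_r(G)$, so the $P_q$ are mutually orthogonal idempotents in $\fB(\fH)$; writing $\fal_q=1_{g_qH}\fal(G)$ for the closed ideal of functions in $\fal(G)$ supported on $g_qH$, we have $\fal_q\fal_{q'}=0$ for $q\neq q'$, $\overline{\sum_q\fal_q}=\fal(G)$ (compactly supported elements of $\fal(G)$ are finite sums of pieces in the $\fal_q$), and $\pi(\fal_q)$ is carried by $\fH_q:=P_q\fH$. Thus $\pi$ is ``block diagonal'' along the family $(\fH_q)$: $\pi(u)=\sum_q\pi(u_q)$ with $\pi(u_q)$ acting on $\fH_q$, where $u=\sum_q u_q$, $u_q\in\fal_q$. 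The quantitative heart of the argument is that $(\fH_q)$ is a Riesz system: because, for any unitary representation $\sigma$, the coefficient maps $\xi\mapsto\langle\sigma(\cdot)\xi,\eta_0\rangle$ and $\eta\mapsto\langle\sigma(\cdot)\xi_0,\eta\rangle$ are complete contractions from column, respectively row, Hilbert space into $\fsal_r(G)$ when $\|\eta_0\|,\|\xi_0\|\le1$, the two coefficient representations of $1_{g_qH}$ give $\sum_q P_q^*P_q\le\cbnorm{\pi}^2\,I$ and $\sum_q P_qP_q^*\le\cbnorm{\pi}^2\,I$, hence $\sum_q\|P_q\xi\|^2\le\cbnorm{\pi}^2\|\xi\|^2$ and $\bigl\|\sum_q\xi_q\bigr\|\le\cbnorm{\pi}\bigl(\sum_q\|\xi_q\|^2\bigr)^{1/2}$ for $\xi_q\in\fH_q$.

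Next, for each $q$, a translation identifies $\fal_q$ completely isometrically with $\fal(H)$ as operator algebras, and $H$, being amenable, is QSIN. The non-degenerate completely bounded homomorphism $\pi_q:\fal_q\to\fB(\fH_q)$ has $\cbnorm{\pi_q}\le\cbnorm{\pi}$, so Theorem~\ref{theo:main} furnishes an invertible $S_q\in\fB(\fH_q)$ with $S_q\pi_q(\cdot)S_q^{-1}$ a $*$-representation and $\|S_q\|\|S_q^{-1}\|\le\cbnorm{\pi_q}^2\le\cbnorm{\pi}^2$; normalising so that $\|S_q\|=\|S_q^{-1}\|\le\cbnorm{\pi}$ makes this bound uniform in $q$. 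I would then define a new inner product on $\fH$ by $\langle\xi,\eta\rangle'=\sum_q\langle S_qP_q\xi,S_qP_q\eta\rangle$. The Riesz estimates, together with the uniform bound on the $S_q$, show that $\langle\cdot,\cdot\rangle'$ is an inner product equivalent to the original one, say $\langle Tx,Ty\rangle=\langle x,y\rangle'$ for a positive invertible $T$; tracking the constants through the two steps — passage to $H$ and reassembly — and through the reduction to the non-degenerate case gives the exponent $6$ in (b). With respect to $\langle\cdot,\cdot\rangle'$ the $\fH_q$ are mutually orthogonal and, on $\fH_q$, $\pi$ acts as $S_q^{-1}$ times a $*$-representation times $S_q$, which is a $*$-representation of $\fal_q$ for $\langle\cdot,\cdot\rangle'$; since $\fal(G)=\overline{\bigoplus_q\fal_q}$ and conjugation of functions respects this decomposition, $\pi$ is a $*$-representation for $\langle\cdot,\cdot\rangle'$, whence $T\pi(\cdot)T^{-1}$ is one for the original inner product, giving (a).

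I expect the main obstacles to be twofold. First, the Riesz-system estimates really do need the operator-space structure: at the Banach-space level $\|1_{g_1H}+\cdots+1_{g_nH}\|_{\fsal_r(G)}$ grows like $\sqrt n$, and for non-amenable $Q$ the bare idempotents $P_q$ need not admit a uniformly bounded simultaneous orthogonalisation, so one cannot bypass the coefficient-map computation. Second, the reduction to a non-degenerate $\pi$ is delicate when $G$ is not amenable, since $\fal(G)$ then has no bounded approximate identity; one must show that the ``off-diagonal'' homomorphism attached to $\overline{\pi(\fal(G))\fH}^{\perp}$ can be conjugated away, and it is at this point, and in the use of the two-sided cosets $g_qH=Hg_q$, that normality of $H$ enters.
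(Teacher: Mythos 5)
Your argument is correct, and while it shares the paper's skeleton --- reduce to the non-degenerate case, use Hulanicki and Fell to get $\lam_{G/H}\prec\lam_G$ so that the coset indicators $1_{sH}$ live in $\fsal_r(G)$, extend $\pi$ by Lemma \ref{lem:extrfs} to obtain the idempotents $P_{sH}=\bar{\pi}(1_{sH})$, and apply Theorem \ref{theo:main} to each $1_{sH}\fal(G)\cong\fal(H)$ --- it diverges genuinely at the reassembly step. The paper handles the family $(P_{sH})$ by a second application of Theorem \ref{theo:main}, this time to the discrete (hence SIN, hence QSIN) quotient $G/H$: this yields a single $S$ with $\|S\|\|S^{-1}\|\leq\cbnorm{\pi}^2$ conjugating the $P_{sH}$ to genuine orthogonal projections, after which the per-coset similarities are glued with partial isometries, each of $T$ and $T^{-1}$ picking up a factor $\cbnorm{\pi}^3$. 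You instead extract the quantitative control on $(P_{sH})$ directly from the operator space structure: the two coefficient realizations $1_{sH}=\langle\lam_{G/H}(\cdot)\del_{s^{-1}H},\del_{eH}\rangle=\langle\lam_{G/H}(\cdot)\del_{eH},\del_{sH}\rangle$ (this is where normality enters, exactly as you say) make $(1_{sH})_{sH}$ a contractive row and a contractive column over $\fsal_r(G)$, whence $\sum P_{sH}P_{sH}^*\leq\cbnorm{\pi}^2 I$ and $\sum P_{sH}^*P_{sH}\leq\cbnorm{\pi}^2 I$; you then renorm $\fH$ rather than conjugate by partial isometries. Both estimates do hold (the orthonormal family sits in the first, respectively second, leg of the coefficient, matching the row/column conventions you state), and your renorming $\langle\xi,\eta\rangle'=\sum\langle S_{sH}P_{sH}\xi,S_{sH}P_{sH}\eta\rangle$ satisfies $\cbnorm{\pi}^{-2}\|\xi\|\leq\|\xi\|'\leq\cbnorm{\pi}^2\|\xi\|$ once the $S_{sH}$ are normalized with $\|S_{sH}\|,\|S_{sH}^{-1}\|\leq\cbnorm{\pi}$; so if you actually track the constants you get $\|T\|\|T^{-1}\|\leq\cbnorm{\pi}^4$, strictly better than the stated $\cbnorm{\pi}^6$ --- worth writing out. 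What your route buys is the avoidance of the second similarity theorem for $G/H$ and a sharper exponent; what the paper's route buys is that it needs only the similarity machinery itself and no row/column computation in $\fsal_r(G)$. On the non-degeneracy reduction: the paper also simply assumes $\sum P_{sH}=I$ after compressing to $\wbar{\pi(\fal(G))\fH}$, so you are not worse off than the source here, but your instinct that this is the genuinely delicate point (since $\fal(G)$ has no bounded approximate identity when $G$ is non-amenable) is right, and a complete write-up should either justify the reduction or state the result for non-degenerate $\pi$.
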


\begin{proof}
We have weak contaiment relation on $H$ given by $1_H\prec \lam_H$, thanks to amenability
of $H$ and Hulanicki's theorem \cite{hulanicki}.  Hence by Fell's continuity of induction \cite{fell}
we have on $G$, weak contaiment $\lam_{G/H}=\ind_H^G 1_H\prec \ind_H^G\lam_H=\lam_G$
where $\lam_{G/H}:G\to \fB(\ell^2(G/H))$ denotes the quasi-regular representation.
Hence the Fourier algebra $\fal(G/H)$, which may be naturally identified with the space
of matrix coefficients of $\lam_{G/H}$, is a subalgebra of $\fsal_r(G)$.
Hence we may compress $\pi$ to $\fH'=\wbar{\pi(\fal(G))\fH}$, and
Lemma \ref{lem:extrfs} provides a completely bounded homomorphism
$\bar{\pi}:\fal(G/H)\to\fB(\fH')\subseteq\fB(\fH)$ such that $\bar{\pi}(u)\pi(v)=\pi(uv)$ for
$u$ in $\fal(G/H)$ and $v$ in $\fal(G)$.

Since $G/H$ is discrete, $\bar{\pi}:\fal(G/H)\to\fB(\fH)$
admits an $S$ in $\fB(\fH)$ for which
\[
S\bar{\pi}(\cdot)S^{-1}\text{ is a }*\text{-representation, and }
\|S\|\|S^{-1}\|\leq \cbnorm{\bar{\pi}}^2\leq\cbnorm{\pi}^2.
\]
In particular, if we let for each coset $sH$ in $G/H$, $E_{sH}=\pi(1_{sH})$, then
$P_{sH}=SE_{sH}S^{-1}$ is a projection.  We assume that $\pi$ is non-degenerate, so
$\sum_{sH\in G/H}P_{sH}=I$, in the weak operator topology.  Hence
$\sum_{sH\in G/H}E_{sH}=\sum_{sH\in G/H}S^{-1}P_{sH}S=I$, too.
For each $sH$ in $G/H$ we have that $1_{sH}\fal(G)\cong\fal(H)$, completely isomorphically.
Hence $\pi_{sH}=\pi|_{1_{sH}\fal(G)}:1_{sH}\fal(G)\to\fB(\fH)$ admits an element $S_{sH}$ in
$\fB(\fH)$ for which
\[
S_{sH}\pi|_{sH}(\cdot)S_{sH}^{-1}\text{ is a }*\text{-representation, and }
\|S_{sH}\|\|S_{sH}^{-1}\|\leq \cbnorm{\pi}^2.
\]
We may hence arrange that $\|S_{sH}\|\leq \cbnorm{\pi}$ and $\|S_{sH}^{-1}\|\leq \cbnorm{\pi}$.
Let $Q_{sH}=S_{sH}E_{sH}S_{sH}^{-1}$, and let $V_{sH}$ be a partial isometry for which
$V_{sH}^*V_{sH}=Q_{sH}$ and $V_{sH}V_{sH}^*=P_{sH}$.
\[
T=\sum_{sH\in G/H}P_{sH}V_{sH}S_{sH}E_{sH}\text{ and }
T'=\sum_{sH\in G/H}E_{sH}S_{sH}^{-1}V_{sH}^*P_{sH}
\]
where, again, convergence is in weak operator topology.  To see that these indeed converge, observe that
$T=\left(\sum_{sH\in G/H}P_{sH}V_{sH}S_{sH}S^{-1}P_{sH}\right)S$, and a similar formula holds for $T'$.
Notice, then, that
\[
\|T\|\leq \norm{\sum_{sH\in G/H}P_{sH}V_{sH}S_{sH}S^{-1}P_{sH}}\|S\|\leq \sup_{sH\in G/H}
\|S_{sH}\|\|S^{-1}\|\|S\|\leq \cbnorm{\pi}^3
\]
and, likewise, $\|T^{-1}\|\leq \cbnorm{\pi}^3$. For each $sH$ in $G/H$ we have  
$V_{sH}S_{sH}E_{sH}S_{sH}^{-1}V_{sH}^*=V_{sH}P_{sH}V_{sH}^*
=P_{sH}$, from which we deduce that $TT'=I$.  Similarly, $T'T=I$, so $T'=T^{-1}$.  
Furthermore, for $u$ in $\fal(G)$ we have
\begin{align*}
\pi_T(\bar{u})&=
\sum_{sH\in G/H}P_{sH}V_{sH}S_{sH}\pi(1_{sH}\bar{u})S_{sH}^{-1}V_{sH}^*P_{sH} \\
&=\sum_{sH\in G/H}P_{sH}V_{sH}\left[S_{sH}\pi(1_{sH}u)S_{sH}^{-1}\right]^*V_{sH}^*P_{sH}
=\pi_T(u)^*.
\end{align*}
Hence $T$ is the desired similarity operator.
\end{proof}

If $\pi:\fal(G)\to\fB(\fH)$ is a homomorphism, we may consider two ancilliary homomorphisms,
$\check{\pi},\pi^*:\fal(G)\to\fB(\fH)$ by $\check{\pi}(u)=\pi(\check{u})$ and
$\pi^*(u)=\pi(\bar{u})^*$.  Since $\fC\fB(\fal(G),\fB(\fH))\cong\vn(G)\bar{\otimes}\fB(\fH)$
(von Neuman tensor product, see, \cite[ 7.2.5 \&   7.2.4]{effrosrB}), 
if $\pi$ is completely bounded it may be identified with an operator $V_\pi$ in $\vn(G)\bar{\otimes}\fB(\fH)$,
which is called a {\it corepresentation}.  With this notation
the following is immediate from \cite[Theorems 9 \&16]{brannans}.

\begin{corollary}\label{cor:brannans}
Suppose that $G$  is either QSIN, or contains an amenable open normal subgroup.  Then for each 
completely bounded homomorphism $\pi:\fal(G)\to\fB(\fH)$ we have:

{\bf (i)}  $\check{\pi}$ and $\pi^*$ are completely bounded; and

{\bf (ii)} the associated corepresentation $V_\pi$ in $\vn(G)\bar{\otimes}\fB(\fH)$ is invertible
and hence similar to a unitary corepresentation.
\end{corollary}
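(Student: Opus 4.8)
The plan is to deduce both assertions by combining two inputs. First, for every group $G$ covered by the hypothesis, every completely bounded homomorphism $\pi:\fal(G)\to\fB(\fH)$ has the completely bounded similarity property: when $G$ is QSIN this is Theorem~\ref{theo:main}, and when $G$ has an amenable open normal subgroup it is Corollary~\ref{cor:simsix}, so in either case there is an invertible $S$ in $\fB(\fH)$ with $\pi_S=S\pi(\cdot)S^{-1}$ a $*$-representation of $\fal(G)$. Second, for an arbitrary locally compact group, possession of the similarity property by $\pi$ is equivalent to (i), and to (ii); this is \cite[Theorems 9 \& 16]{brannans}, and only the (group-independent) implications \emph{from} the similarity property \emph{to} (i) and to (ii) are needed here. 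Combining the two yields the corollary. Before doing so I would make the harmless reduction to $\pi$ non-degenerate --- replacing $\fH$ by $\wbar{\pi(\fal(G))\fH}$ --- which alters neither the complete boundedness of $\check{\pi}$ and $\pi^*$ nor the invertibility of $V_\pi$.

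For the reader's convenience I would also spell out (i) directly. Because $\pi_S$ is a $*$-representation of the conjugation-closed algebra $\fal(G)$, its closed range is a commutative C*-algebra with spectrum contained in $G$; since $\fC_0(G)$ is the enveloping C*-algebra of $\fal(G)$, $\pi_S$ factors through the inclusion $j:\fal(G)\to\fC_0(G)$ as $\pi_S=\rho\circ j$ for some $*$-representation $\rho$ of $\fC_0(G)$. Now $f\mapsto\check{f}$, $\check{f}(s)=f(s^{-1})$, is a $*$-automorphism $\theta$ of $\fC_0(G)$ whose restriction to $\fal(G)$ is $u\mapsto\check{u}$, so $\psi:=(\rho\circ\theta)\circ j$ is again a $*$-representation of $\fal(G)$, satisfying $\psi(u)=\pi_S(\check{u})$; in particular $\psi$ is a complete contraction. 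Since also $\pi_S(\bar{u})^*=\pi_S(u)$, we compute $\check{\pi}(u)=\pi(\check{u})=S^{-1}\psi(u)S$ and $\pi^*(u)=\pi(\bar{u})^*=S^*\,\pi_S(u)\,(S^*)^{-1}$; thus $\check{\pi}$ and $\pi^*$ are similar to complete contractions, and $\cbnorm{\check{\pi}},\cbnorm{\pi^*}\leq\|S\|\|S^{-1}\|$ --- at most $\cbnorm{\pi}^2$ when $G$ is QSIN and at most $\cbnorm{\pi}^6$ otherwise.

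For (ii) I would use the identification $\fC\fB(\fal(G),\fB(\fH))\cong\vn(G)\bar{\otimes}\fB(\fH)$ together with the framework of \cite{brannans}, under which a non-degenerate $*$-representation of $\fal(G)$ corresponds to a unitary corepresentation and the passage from $\pi$ to $\pi_S$ amounts to conjugating $V_\pi$ by the invertible element $1\otimes S$ of $\vn(G)\bar{\otimes}\fB(\fH)$; then $V_\pi$ is similar to the unitary corepresentation $V_{\pi_S}$, and in particular invertible. The expected main obstacle is not located here at all: the entire content of the corollary rests on $\pi$ having the similarity property, and it is exactly this --- for the larger class of groups under consideration --- that is supplied by Theorem~\ref{theo:main} and Corollary~\ref{cor:simsix}. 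Granting those, the remaining steps are the bookkeeping above and a citation of \cite[Theorems 9 \& 16]{brannans}.
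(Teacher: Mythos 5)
Your proposal is correct and follows the same route as the paper: the corollary is obtained by combining the similarity property supplied by Theorem~\ref{theo:main} (QSIN case) and Corollary~\ref{cor:simsix} (amenable open normal subgroup case) with the group-independent implications of \cite[Theorems 9 \& 16]{brannans}, which is exactly what the paper means by ``immediate.'' Your additional direct verification of (i) via the factorization of $\pi_S$ through $\fC_0(G)$ and the $*$-automorphism $f\mapsto\check{f}$ is sound and merely fills in detail the paper leaves to the cited reference.
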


In fact, it is shown in \cite{brannans} that for any locally compact $G$, either (i) or (ii) implies that 
$\pi$ is similar to a $*$-homomorphism.  A notable feature of their methods is that they
gain a proof,  \cite[Corollary 10]{brannans},
that the Gelfand spectrum of $\fal(G)$ is $G$, using Wendel's theorem characterizing
$\meas(G)$ as multipliers of $\bl^1(G)$.

\begin{conjecture}
For any locally compact group, we always have:
\[
\wtil{\fal}(G)_1\cong\fC_0(G)\text{ and }d_{\cb}(\fal(G))\leq 2.
\]
\end{conjecture}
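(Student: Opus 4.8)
The conjecture is the conjunction of two assertions: (I) $\wtil{\fal}(G)_1\cong\fC_0(G)$ --- equivalently, by Proposition~\ref{prop:contfal}, every completely contractive homomorphism $\pi\colon\fal(G)\to\fB(\fH)$ is a $*$-homomorphism --- and (II) $d_{\cb}(\fal(G))\leq 2$. Granting (I), the argument of Theorem~\ref{theo:main} shows that (II) follows once one knows that $m=m_{2,1}\colon\fal(G)\otimes^h\fal(G)\to\fC_0(G)$ is a complete quotient map, equivalently --- dualizing exactly as there --- that
\[
m^*\colon\meas(G)\to\vn(G)\otimes^{eh}\vn(G),\qquad m^*(\mu)=\int_G\lam(s)\otimes\lam(s)\,d\mu(s),
\]
is a complete isometry. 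So the plan is to establish (I) in general, then the complete isometry of $m^*$ in general, after which the conclusion of Theorem~\ref{theo:main} applies verbatim.

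For (I) one can first cut down the class of groups. The open-subgroup permanence of \cite[Lemma~24]{brannans} (already used in Proposition~\ref{prop:contfal}), together with the existence of a compact open subgroup of the totally disconnected group $G/G_e$, reduces matters to almost connected $G$; by the Gleason-Yamabe theorem such a $G$ is a projective limit of Lie groups modulo compact normal subgroups, and since $\fal(G)$ is the closed union of the isometrically embedded subalgebras $\fal(G/K)$ over compact normal subgroups $K$ with $\bigcap K=\{e\}$, one is reduced to connected Lie groups. For such a $G$ the solvable radical $R$ is amenable, so $\fal(R)$ poses no difficulty, but $R$ is not open and there is no direct reduction to $G/R$; I expect the crux to be a direct treatment of connected semisimple Lie groups. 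For such a group one would try to prove that $\pi(u)$ is self-adjoint for each real-valued $u\in\fal(G)$ (which suffices, since $\pi(\bar u)=\pi(\mathrm{Re}\,u)-i\,\pi(\mathrm{Im}\,u)=\pi(u)^*$): here $\sigma(\pi(u))\subseteq u(G)\cup\{0\}\subseteq\Ree$ is automatic, but unlike in a C*-algebra the resolvent $(\zeta-u)^{-1}$ is not norm-controlled by $1/\mathrm{dist}(\zeta,u(G))$ in the unitization of $\fal(G)$, so self-adjointness of $\pi(u)$ cannot be read off elementwise and must be extracted from $\cbnorm{\pi}\leq 1$ applied to matrices built from $u$ and from the explicit harmonic analysis of $G$ (e.g.\ its spherical functions). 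This is the first main obstacle.

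For the complete isometry of $m^*$: it is contractive (as $\norm{\lam(s)\otimes\lam(s)}_{eh}=1$), hence completely contractive since $\meas(G)$ is maximal, so one needs only produce, for each $n$, a subspace $\fS\subseteq\fB(\bl^2(G))$ on which $\id_n\otimes m^*$ is isometric into $\fC\fB(\fS,M_n\otimes\fB(\bl^2(G)))$. In Theorem~\ref{theo:main} this rested on the quasi-central net: for the test family $\{UM(f):f\in\fC_0(G)\}$ the pairing $\langle\lam(s)(UM(f))\lam(s)\xi_\alp\,|\,\xi_\alp\rangle$ localises $f$ at $s$ precisely to the extent that $\lam(s)\rho(s)\xi_\alp\approx\xi_\alp$, i.e.\ precisely QSIN-ness; and since rank-one test operators recover only $\norm{\lam(\mu)}_{\vn(G)}<\norm{\mu}$ for non-amenable $G$, an infinite-rank family really is needed. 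The plan is therefore to seek a substitute family $\fS$ adapted to the structure of $G$ rather than to a quasi-central net --- for a semisimple Lie group, say, operators built from the $KAN$ decomposition or from matrix coefficients of the principal series, paired against bump nets along $A$ and along $K$, with the flip map inserted at the matrix level exactly as at the end of the proof of Theorem~\ref{theo:main}. A complementary route, for $G$ admitting an amenable open normal subgroup (already handled with degree $\leq 6$ in Corollary~\ref{cor:simsix}), would be to rework the gluing there so as not to cube the estimate; but that leaves groups such as $\mathrm{SL}_2(\Ree)$ untouched. Failing all this, one might hope to convert the Neufang-Ruan-Spronk complete isometry of $\mu\mapsto\int_G\lam(s)\otimes\lam(s^{-1})\,d\mu$ of Remark~\ref{rem:relations}(i) into the statement for $m^*$ --- but since $\lam(s)\mapsto\lam(s^{-1})$ is completely bounded only for virtually abelian $G$, this needs a genuinely new idea, and I expect it to be the second and harder main obstacle.

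Once (I) and the complete isometry of $m^*$ are in hand, the conclusion follows as in Theorem~\ref{theo:main}: by duality $m_{2,1}\colon\fal(G)\otimes^h\fal(G)\to\fC_0(G)=\wtil{\fal}(G)_1$ is a complete quotient map with $K=1$, so Theorem~\ref{theo:sim} with $N=2$ yields for every completely bounded $\pi\colon\fal(G)\to\fB(\fH)$ an invertible $S$ with $\cbnorm{S\pi(\cdot)S^{-1}}\leq 1$ and $\norm{S}\norm{S^{-1}}\leq\cbnorm{\pi}^2$, while the extension of (I) to all $G$ forces $S\pi(\cdot)S^{-1}$ to be a $*$-representation. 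Hence $\wtil{\fal}(G)_1\cong\fC_0(G)$ and $d_{\cb}(\fal(G))\leq 2$, as conjectured.
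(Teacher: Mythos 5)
The statement you are addressing is presented in the paper as a \emph{conjecture}: the paper offers no proof of it, and your proposal does not supply one either. What you do correctly is identify the reduction that the paper's own framework suggests: by Proposition~\ref{prop:contfal}, the first half of the conjecture is equivalent to the assertion (I) that every completely contractive homomorphism $\fal(G)\to\fB(\fH)$ is a $*$-homomorphism, and, granting (I), the second half follows from (II) the complete isometry of $m^*(\mu)=\int_G\lam(s)\otimes\lam(s)\,d\mu(s)$ on $\meas(G)$, exactly as in the proof of Theorem~\ref{theo:main} via Theorem~\ref{theo:sim}. Your closing paragraph, which deduces the conjecture from (I) and (II), is sound. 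But (I) and (II) for a general locally compact group are precisely the open content of the conjecture, and you prove neither: you yourself label them as the ``first'' and ``second'' main obstacles and offer only speculative directions (Gleason--Yamabe reduction to connected Lie groups, spherical functions, test operators built from the $KAN$ decomposition), none of which is carried out. A reduction of a conjecture to two unproved sub-conjectures that jointly exhaust its content is not a proof.

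Two of the intermediate steps you do sketch also need more than you give them. First, the passage from almost connected groups to connected Lie groups: density of $\bigcup_K\fal(G/K)$ in $\fal(G)$ plausibly transfers property (I) by continuity, but you say nothing about how the complete quotient property of $m_{2,1}$ onto each $\fC_0(G/K)$ (with a uniform constant $K=1$, which is what degree $2$ requires) would assemble into the corresponding statement for $\fC_0(G)$; inverse limits do not automatically preserve complete surjectivity with controlled constants. Second, for the key case of a connected semisimple group such as $\mathrm{SL}_2(\Ree)$ --- which is not QSIN, since a connected QSIN group is amenable by Remark~\ref{rem:scope}(i) --- the entire mechanism of the paper's proof of Theorem~\ref{theo:main} rests on the quasi-central unit vectors $\xi_\alp$ with $\lam(x)\rho(x)\xi_\alp\approx\xi_\alp$, and no such vectors exist; you acknowledge this but propose no concrete substitute. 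As it stands, your submission is a reasonable research programme that correctly locates the difficulties, not a proof; the statement remains a conjecture.
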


\begin{remark}\label{rem:autobdd}
For a general amenable group $G$, it is unkown if any bounded homomorphism from $\fal(G)$
into $\fB(\fH)$ is automatically completely bounded, unless $G$ is virtually abelian; see \cite{forrestw}.
One method of attack would be to determine if
$\wtil{\max\fal}(G)_1=\fC_0(G)$, and, if that is true, then see if $m_{2,1}:\max\fal(G)\otimes^h\max\fal(G)
\to\fC_0(G)$ is completely surjective.  By a characterization of maximal operator spaces, the latter
property amounts  to verifying whether  there is a $K>0$ such that for each $n$, any
element $[h_{ij}]$ in the open unit ball of $M_n(\fC_0(G))\cong \fC_0(G,M_n)$ may be uniformly
approximated by elements of the form
\[
s\mapsto\left[\sum_{k,l=1}^m\alp_{ik}\beta_{kl}\gamma_{lj}u_kv_l(s)\right]=
\alp\mathrm{diag}(u_1(s),\dots,u_m(s))\beta\mathrm{diag}(v_1(s),\dots,v_m(s))\gamma
\]
where $m\in\En$, $\|\alp\|_{M_{n,m}}\|\beta\|_{M_m}\|\gamma\|_{M_{m,n}}<1$, and
$\|u_k\|_\fal<K$ and $\|v_k\|_\fal<K$ for each $k$.  Indeed, this is the Blecher-Paulsen
factorization of \cite{pisier}.
\end{remark}

\section{The case of similarity degree $1$}

We close by addressing the case when $d_{\cb}(\fal(G))=1$. 

\begin{proposition}\label{prop:simone}
For a locally compact group $G$ the following are equivalent:

{\bf (i)} $G$ is finite;

{\bf (ii)} $d_{\cb}(\fal(G))=1$; and

{\bf (iii)} $\fal(G)$ is completely isomorphic to an operator algebra.
\end{proposition}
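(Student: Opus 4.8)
The plan is to prove the cycle $\text{(i)}\Rightarrow\text{(ii)}\Rightarrow\text{(iii)}\Rightarrow\text{(i)}$, with the last implication being the substantive one. For $\text{(i)}\Rightarrow\text{(ii)}$: if $G$ is finite, then $\fal(G)=\fC_0(G)$ is a finite-dimensional commutative C*-algebra, and the identity representation $\iota_1:\fal(G)\to\wtil{\fal}(G)_1$ is a complete isometry onto $\fC_0(G)$ (every completely contractive homomorphism of $\fC_0(G)$ is a $*$-homomorphism, cf. Proposition \ref{prop:contfal}), so $m_{1,1}$ is trivially a complete surjection with $K=1$; hence $d_{\cb}(\fal(G))=1$. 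For $\text{(ii)}\Rightarrow\text{(iii)}$: as observed just after Theorem \ref{theo:sim}, $d_{\cb}(\fal(G))=1$ forces $\iota_1:\fal(G)\to\wtil{\fal}(G)_1$ to be a completely contractive complete isomorphism; since $\wtil{\fal}(G)_1$ is by construction a (norm-closed) operator algebra, $\fal(G)$ is completely isomorphic to an operator algebra.

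The heart of the matter is $\text{(iii)}\Rightarrow\text{(i)}$. The strategy mirrors the final remark of Section 1 concerning $\bl^1(G)$: an operator algebra is Arens regular (Civin--Yood \cite{civiny}), so it suffices to show that $\fal(G)$ is Arens regular \emph{only} when $G$ is finite. Here I would invoke the known dichotomy for Arens regularity of the Fourier algebra: by results of Forrest (and, in the relevant generality, Lau and others), $\fal(G)$ is Arens regular if and only if $G$ is finite -- or, at minimum, Arens regularity of $\fal(G)$ implies $G$ is discrete, and then a further argument (e.g. via the fact that $\fal(G)$ for infinite discrete $G$ contains, through an appropriate infinite subset, an isomorphic copy of a non-Arens-regular situation, or via Young-type arguments on $\ell^1$) forces finiteness. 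Concretely, one can argue: if $\fal(G)$ is completely isomorphic to an operator algebra then it is Arens regular, hence (as $\fal(G)$ always has a bounded approximate identity by Leptin's theorem precisely when $G$ is amenable -- and a completely isomorphic copy of an operator algebra that is also a dense subalgebra of $\fC_0(G)$ forces amenability and more) the multiplication on $\fal(G)^{**}\cong\vn(G)^*$ is weakly compact; one then extracts that the underlying group cannot contain an infinite set on which the $\fal$-norm behaves like the $\ell^1$-norm, which pins $G$ down to being finite.

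The main obstacle I anticipate is making the step ``$\fal(G)$ completely isomorphic to an operator algebra $\Rightarrow G$ finite'' genuinely airtight, since Arens regularity alone is classically only known to force $G$ discrete in full generality, and upgrading ``discrete'' to ``finite'' requires the characterization of Arens-regular (or, here, operator-algebra-isomorphic) Fourier algebras of discrete groups. I would handle this by first reducing to $G$ discrete using Arens regularity, then noting that for discrete $G$ the algebra $\fal(G)$ being (completely) isomorphic to an operator algebra would in particular make it Arens regular; and for infinite discrete $G$ one produces a sequence witnessing failure of Arens regularity -- for instance, using a sequence of distinct points and the fact that finitely supported functions sit inside $\fal(G)$ with control, one reconstructs the classical obstruction to Arens regularity of $\ell^1$-like algebras. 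If a direct reference for ``$\fal(G)$ Arens regular $\Rightarrow G$ finite'' is available it should simply be cited; otherwise the finiteness is squeezed out by combining Arens regularity (via \cite{civiny}) with the structure of $\fal(G)$ as a dense conjugation-closed subalgebra of $\fC_0(G)$ with spectrum $G$.
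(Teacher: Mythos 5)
Your implications (i)$\Rightarrow$(ii)$\Rightarrow$(iii) are essentially the paper's argument (one small correction: for finite $G$ with more than one element, $\iota_1$ is a complete \emph{isomorphism} onto $\fC_0(G)$, not a complete isometry, since the $\fal$-norm and the sup-norm differ; this does not affect the conclusion $d_{\cb}(\fal(G))=1$, as a complete surjection only requires some constant $K$).

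The gap is in (iii)$\Rightarrow$(i), and it is exactly the one you flag yourself but do not close. The ``dichotomy'' you propose to invoke --- that $\fal(G)$ is Arens regular if and only if $G$ is finite --- is \emph{not} a known theorem; the paper explicitly remarks after this proposition that the characterization of Arens-regular Fourier algebras remains unsolved, and the best general result only forces $G$ to be discrete with no infinite amenable subgroups. Your fallback, ``for infinite discrete $G$ one produces a sequence witnessing failure of Arens regularity,'' does not work in that generality: for a discrete group with no infinite amenable subgroups (e.g.\ a Tarski monster) no such obstruction is known. Likewise your parenthetical claim that being completely isomorphic to an operator algebra ``forces amenability and more'' is asserted without justification, and amenability is precisely the missing ingredient.

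The paper closes this gap by feeding its own main theorem back into the argument. After Civin--Yood \cite{civiny} gives Arens regularity and Forrest \cite{forrest} gives discreteness, note that a discrete group is SIN, hence QSIN, so Theorem \ref{theo:main} applies to the complete isomorphism $\pi:\fal(G)\to\fB(\fH)$ realizing $\fal(G)$ as an operator algebra: after a similarity, the image is a commutative C*-algebra, so $\fal(G)$ is isomorphic to a C*-algebra and in particular has a bounded approximate identity. Leptin's theorem \cite{leptin} then gives that $G$ is amenable, and Forrest's Corollary 3.6 (amenable plus Arens regular implies finite) finishes. Alternatively the paper cites Lee--Youn \cite{leey}, which for discrete $G$ shows directly that $\fal(G)$ completely isomorphic to an operator algebra forces the constant function $1$ to be square integrable, hence $G$ finite. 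Either route supplies the step your proposal leaves open.
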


\begin{proof}
If $G$ is finite, then we know that the inclusion map $j:\fal(G) \to \fC_0(G)$ is a complete isomorphism, which factors through $\tilde{\fal}(G)_1$. This means that $\iota_1 : \fal(G) \to \wtil{\fal}(G)_1$ is also a complete surjection (in fact, a complete isomorphism), which implies that $d_{cb}(\fal(G)) =1$.
Hence (i) implies (ii) implies (iii).

Let us suppose that $\fal(G)$ is completely isomorphic to an operator algebra, 
which implies that it is Arens regular (\cite{civiny}).
By a result of Forrest (\cite[Theorem 3.2]{forrest}), the Arens regularity tells us that $G$
is discrete.  We complete the proof in two distinct ways.   

(I)  It follows from Theorem \ref{theo:main} (or from \cite[Theorem 20]{brannans})
that $\fal(G)$ is isomorphic to a C*-algebra, hence has a bounded approximate identity.
Thus, by Leptin's theorem \cite{leptin}, $G$ is amenable.  But then, \cite[Corollary 3.6]{forrest}
tells us that if $G$ is amenable and $\fal(G)$ is Arens regular, then $G$ is finite.

(II)  For a discrete group $G$, the result \cite[Theorem 5.4]{leey}, of the first named author and Youn,
implies that $\fal(G)$ is completely isomorphic to an operator algebra only if the constant function
$1$ is square integrable, hence $G$ is finite.

In summary, (iii) implies (i).
\end{proof}

We note that the full characterization of those $G$ for which $\fal(G)$ is Arens regular remains unsolved.
To the authors' knowledge, the strongest result at present is that if $\fal(G)$ is Arens
regular, then $G$ is a discrete group containing no infinite amenable
subgroups; see \cite{forrest,forrest1}.  Likewise, we do not know of a full characterization of those $G$ 
for which $\fal(G)$ is boundedly isomorphic to an operator algebra.

Combining Theorem \ref{theo:main} and the above proposition yields the following.

\begin{corollary}
For an infinite QSIN group $G$, $d_{\cb}(\fal(G))=2$.
\end{corollary}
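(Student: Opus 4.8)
The plan is simply to combine Theorem \ref{theo:main} with Proposition \ref{prop:simone}. First I would invoke Theorem \ref{theo:main}: for a QSIN group $G$ its proof shows that $m_{2,1}\colon\fal(G)\otimes^h\fal(G)\to\fC_0(G)$ is a complete quotient map, and since $\wtil{\fal}(G)_1\cong\fC_0(G)$ by Proposition \ref{prop:contfal} (the connected component of a QSIN group being amenable, as recorded in Remark \ref{rem:scope} (i)), the hypotheses of Theorem \ref{theo:sim} are satisfied with $N=2$. By the definition of the completely bounded similarity degree as the smallest such $N$, this yields $d_{\cb}(\fal(G))\leq 2$; in particular $d_{\cb}(\fal(G))$ is a well-defined positive integer, hence equals either $1$ or $2$.

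Next I would rule out the value $1$. By Proposition \ref{prop:simone}, the equality $d_{\cb}(\fal(G))=1$ holds only if $G$ is finite. Since $G$ is assumed infinite, we conclude $d_{\cb}(\fal(G))\neq 1$, and therefore $d_{\cb}(\fal(G))=2$.

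There is no genuine obstacle in this deduction: all the substance is already contained in Theorem \ref{theo:main} (the upper bound, whose crux is the complete-isometry statement (\ref{eq:multcsd}), obtained from the quasi-central approximate identity $(u_\alp)$ and the unit vectors $\xi_\alp=u_\alp^{1/2}$ in $\bl^2(G)$) and in Proposition \ref{prop:simone} (the lower bound, whose crux is that an operator algebra is Arens regular while $\fal(G)$ is not Arens regular for $G$ infinite). The corollary is merely the conjunction of these two facts, so the proof will be two lines.
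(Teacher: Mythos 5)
Your proposal is correct and matches the paper exactly: the paper states the corollary as an immediate combination of Theorem \ref{theo:main} (giving $d_{\cb}(\fal(G))\leq 2$ via the complete quotient map $m_{2,1}$) with Proposition \ref{prop:simone} (which forces $d_{\cb}(\fal(G))\neq 1$ when $G$ is infinite). Nothing further is needed.
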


\section{Acknowledgements}
The authors benefitted from conversations with Y. Choi, who was aware of the techniques used in
Remark \ref{rem:relations} (ii).  The third author benefitted from conversations with V.I. Paulsen, from 
whom he learned the significance of the factorization used in Remark \ref{rem:autobdd}; and from
conversations with J. Crann, who kindly pointed out some errors in an earlier version of 
Remark \ref{rem:scope}.  The authors are also grateful to the anonymous referee who read the 
paper carefully and suggested many improvements to the exposition.


Addresses:

\noindent {\sc 
Department of Mathematical Sciences  and Research Institute of Mathematics, Seoul National University,
Gwanak-ro 1, Gwanak-gu, Seoul 08826, Republic of Korea \\ \\
Department of Mathematics and Statistics, University of Saskatchewan,
Room 142 McLean Hall, 106 Wiggins Road
Saskatoon, sk, S7N 5E6  \\ \\
Department of Pure Mathematics, University of Waterloo,
Waterloo, ON, N2L 3G1, Canada.}

\medskip
Email-adresses:
\linebreak
{\tt hunheelee@snu.ac.kr}
\linebreak {\tt samei@math.usask.ca}
\linebreak {\tt nspronk@uwaterloo.ca}


\begin{thebibliography}{10}

\bibitem{bernshtein}
I.N. Bern\v{s}te\u{\i}n,
All reductive $p$-adic groups are of type I. 
{\em Funkcional. Anal. i Prilo\v{z}en.} 8 (1974), no. 2, 3--6;  
translation in {\em Functional Anal. Appl.} 8 (1974), 91--93

\bibitem{blecherlm}
D.P. Blecher and C. Le Merdy,
{\em Operator algebras and their module--an operator space approach.} 
London Mathematical Society Monographs. New Series, 30. Oxford Science Publications. 
The Clarendon Press, Oxford University Press, Oxford, 2004.

\bibitem{blechers}
D.P. Blecher and R.R. Smith,
The dual of the Haagerup tensor product. {\em J. London Math. Soc. (2)} 45 (1992), no.\ 1, 126--144.

\bibitem{brannands}
M. Brannan, M. Daws and E. Samei,
Completely bounded representations of convolution algebras of locally compact quantum groups. 
{\it M\"{u}nster J. Math.} 6 (2013), 445--482. 

\bibitem{brannans}
M. Brannan and E. Samei,
The similarity problem for Fourier algebras and corepresentations of group von Neumann algebras. 
{\em J. Funct. Anal.} 259 (2010), no.\ 8, 2073--2097.

\bibitem{breuillardg}
E. Breuillard and T. Gelander,
A topological Tits alternative. {\em Ann. of Math. (2)} 166 (2007), no.\ 2, 427--474.

\bibitem{chois}
Y. Choi and E. Samei, 
Quotients of Fourier algebras, and homomorphisms which are not completely bounded. 
{\em Proc. Amer. Math. Soc.} 141 (2013), no.\ 7, 2379--2388.

\bibitem{civiny}
P. Civin and B. Yood,
The second conjugate space of a Banach algebra as an algebra. 
{\em Pacific J. Math.} 11 (1961), 847--870.

\bibitem{curtisl}
P.C. Curtis and R.J. Loy,
The structure of amenable Banach algebras. {\em J. London Math. Soc. (2)} 40 (1989), no.\ 1, 89--104.

\bibitem{dixmier}
J. Dixmier,
Les moyennes invariantes dans les semi-groups et leurs applications.  
{\em Acta Sci. Math. Szeged} 12, (1950). 
Leopoldo Fej\'{e}r et Frederico Riesz LXX annos natis dedicatus, Pars A, 213--227. 

\bibitem{dixmierB}
J. Dixmier,
{\em C*-algebras}. Translated from the French by Francis Jellett. 
North-Holland Mathematical Library, Vol.\ 15. North-Holland Publishing Co., 
Amsterdam-New York-Oxford, 1977.

\bibitem{effrosrB}
E.G. Effros and Z.-J. Ruan,
{\em Operator spaces.} London Mathematical Society Monographs. New Series, 23. 
The Clarendon Press, Oxford University Press, New York, 2000.

\bibitem{eymard}
P. Eymard,  L'alg\`{e}bre de Fourier d'un groupe localement compact. {\em Bull. Soc. Math. France} 92 
(1964), 181--236.


\bibitem{fell}
J.M.G. Fell,
Weak containment and induced representations of groups. 
{\em Canad. J. Math.} 14 (1962), 237--268. 

\bibitem{forrest}
B.E. Forrest,
Arens regularity and discrete groups. {\em Pacific J. Math.} 151 (1991), no.\ 2, 217--227. 

\bibitem{forrest1}
B.E. Forrest,
Arens regularity and the $A_p(G)$ algebras. 
{\em Proc. Amer. Math. Soc.} 119 (1993), no.\ 2, 595--598.

\bibitem{forrestr}
B.E. Forrest and V. Runde,
Amenability and weak amenability of the Fourier algebra. {\em Math. Z.} 250 (2005), no.\ 4, 731--744.

\bibitem{forrestw}
B.E. Forrest and P.J. Wood,
Cohomology and the operator space structure of the Fourier algebra and its second dual. 
{\em Indiana Univ. Math. J.} 50 (2001), no.\ 3, 1217--1240.

\bibitem{granirerl}
E.E. Granirer and M. Leinert,
On some topologies which coincide on the unit sphere of the Fourier-Stieltjes algebra $B(G)$ and of the 
measure algebra $M(G)$. {\em Rocky Mountain J. Math.} 11 (1981), no.\ 3, 459--472.

\bibitem{helemskii}
A.Ya.\ Helemski\u{\i},
{\em The homology of Banach and topological algebras.} Translated from the Russian by Alan West. 
Mathematics and its Applications (Soviet Series), 41. Kluwer Academic Publishers Group, Dordrecht, 1989. 

\bibitem{hulanicki}
A. Hulanicki,
Groups whose regular homomorphism weakly contains all unitary homomorphisms. 
{\em Studia Math.} 24 (1964), 37--59.

\bibitem{laul}
A.T.-M. Lau and V. Losert,
The C*-algebra generated by operators with compact support on a locally compact group. 
{\em J. Funct. Anal.} 112 (1993), no.\ 1, 1--30. 

\bibitem{laup}
A.T.-M. Lau and A.L.T. Paterson,
Inner amenable locally compact groups. {\em Trans. Amer. Math. Soc.} 325 (1991), no.\ 1, 155--169.

\bibitem{leey}
H.H. Lee and S.-G. Youn,
New deformations of convolution algebras and Fourier algebras on locally compact groups,
preprint, {\tt arXiv:1508.01092}.

\bibitem{leptin}
H. Leptin,
Sur l'alg\`{e}bre de Fourier d'un groupe localement compact. 
{\it C. R. Acad. Sci. Paris S\`{e}r. A-B} 266 (1968), A1180--A1182.

\bibitem{losertr}
V. Losert and H. Rindler,
Asymptotically central functions and invariant extensions of Dirac measure. 
{\em Probability measures on groups, VII (Oberwolfach, 1983)}, 368--378, 
Lecture Notes in Math., 1064, Springer, Berlin, 1984. 

\bibitem{marcouxp}
L.W. Marcoux and A.I. Popov,
Abelian, amenable operator algebras are similar to C*-algebras, (preprint), {\tt arXiv:1311.2982}.

\bibitem{mosak}
R.D. Mosak,
Central functions in group algebras. {\em Proc. Amer. Math. Soc.} 29 (1971), 613--616. 

\bibitem{neufangrs}
M. Neufang, Z.-J. Ruan and N. Spronk,
Completely isometric representations of $M_{cb}A(G)$ and $UCB(\hat{G})$. 
{\em Trans. Amer. Math. Soc.} 360 (2008), no.\ 3, 1133--1161.

\bibitem{paterson}
A.L.T. Paterson,
The class of locally compact groups $G$ for which $C^*(G)$ is amenable. 
{\em Harmonic analysis (Luxembourg, 1987)}, 226--237, Lecture Notes in Math., 1359, Springer, Berlin, 1988.


\bibitem{paulsen}
V.I. Paulsen, 
Completely bounded homomorphisms of operator algebras. 
{\em Proc. Amer. Math. Soc.} 92 (1984), no.\ 2, 225--228. 

\bibitem{pisier}
G. Pisier,
The similarity degree of an operator algebra. {\em Algebra i Analiz} 10 (1998), no.\ 1, 132--186; 
translation in {\em St. Petersburg Math. J.} 10 (1999), no.\ 1, 103--146 


\bibitem{ruan}
Z.-J. Ruan,
The operator amenability of $A(G)$. {\em Amer. J. Math.} 117 (1995), no.\ 6, 1449--1474. 


\bibitem{spronkT}
N. Spronk, 
{\em On multipliers of the Fourier algebra of a locally compact group}. Thesis (Ph.D.) -- 
University of Waterloo (Canada), 2002.

\bibitem{spronk}
N. Spronk. 
Measurable Schur multipliers and completely bounded multipliers of the Fourier algebras. 
{\em Proc. London Math. Soc. (3)} 89 (2004), no.\ 1, 161--192.

\bibitem{stokke}
R. Stokke,
Quasi-central bounded approximate identities in group algebras of locally compact groups. 
{\em Illinois J. Math.} 48 (2004), no.\ 1, 151--170.


\bibitem{young}
N.J. Young,
The irregularity of multiplication in group algebras. {\em Quart J. Math. Oxford Ser. (2)} 24 (1973), 59--62.

\end{thebibliography}
\end{document}